\newtheorem{theorem}{Theorem}[section]
\newtheorem{proposition}[theorem]{Proposition}
\newtheorem{lemma}[theorem]{Lemma}
\theoremstyle{definition}
\newtheorem{definition}[theorem]{Definition}
\theoremstyle{remark}
\newtheorem{remark}[theorem]{Remark}
\newtheorem{example}[theorem]{Example}
\newcommand{\define}{\overset{\mathrm{def}}{=}}
\renewcommand{\b}{\mathrm{b}}
\renewcommand{\L}{\mathrm{L}}
\renewcommand{\Im}{\mathrm{Im}}
\renewcommand{\Re}{\mathrm{Re}}
\numberwithin{equation}{section}
\begin{document}
\title[LARGE-SCALE VOLUME GROWTH]{On uniform large-scale volume growth for the Carnot-Carath\'eodory metric on unbounded model hypersurfaces in $\mathbb{C}^2$}

\author[E. DLUGIE]{Ethan Dlugie}
\address{Department of Mathematics, Northwestern University, 721 University Place, Evanston, IL 60201, USA}
\email{EthanDlugie2018@u.northwestern.edu}
\thanks{Dlugie's work was supported by a grant from the WCAS Undergraduate Research Grant Program which is administered by Northwestern University's Weinberg College of Arts and Sciences.}

\author[A. PETERSON]{Aaron Peterson}
\address{Department of Mathematics, Northwestern University, 2033 Sheridan Road, Evanston, IL 60208, USA}
\email{aaron@math.northwestern.edu}
\thanks{The authors thank the referee for many suggestions which improved the quality and clarity of the paper.}

\keywords{Carnot-Carath\'eodory metric, global behavior, volume growth}
\subjclass[2010]{Primary 53C17; Secondary 32V15, 43A85}

\begin{abstract}
	We consider the rate of volume growth of large Carnot-Carath\'eodory metric balls on a class of unbounded model hypersurfaces in $\mathbb{C}^2$. When the hypersurface has a uniform global structure, we show that a metric ball of radius $\delta \gg 1$ either has volume on the order of $\delta^3$ or $\delta^4$. We also give necessary and sufficient conditions on the hypersurface to display either behavior.
\end{abstract}
\maketitle

\section{Introduction}
	The study of holomorphic functions on pseudoconvex domains $\Omega\subseteq\mathbb{C}^n$ ($n\geq 2$) often reduces to studying the partial differential operator $\bar{\partial}$ on $\Omega$ given by $\bar{\partial}(f)=\sum f_{\bar{z}_j}d\bar{z}^j$. We can study the boundary values of holomorphic functions (on ${\rm b}\Omega$) by studying the partial differential operator $\bar{\partial}_b$ induced on ${\rm b}\Omega$ by $\bar{\partial}$. We locally express $\bar{\partial}_b$ in terms of differentiation with respect to $(n-1)$-antiholomorphic vector fields (the so-called Cauchy-Riemann, or CR, vector fields on ${\rm b}\Omega$) that are tangent to ${\rm b}\Omega$. Under mild non-degeneracy conditions on ${\rm b}\Omega$ we can access a family of metrics on ${\rm b}\Omega$ specifically adapted to the study of $\bar{\partial}$ and $\bar{\partial}_b$, in the sense that they capture important geometric aspects of ${\rm b}\Omega$. One of these, the Carnot-Carath\'eodory (CC) metric $d(\mathbf{p},\mathbf{q})$, measures the infimal length of paths on ${\rm b}\Omega$ that not only connect the points $\mathbf{p}$ and $\mathbf{q}$, but are also almost-everywhere tangent to the real and imaginary parts of the CR vector fields; see \cite{Street2014} and the references therein for an extensive history of this metric and its applications to the study of $\bar{\partial}$ and $\bar{\partial}_b$.
	\smallskip
	
	In this paper we consider the CC metric $d(\mathbf{p},\mathbf{q})$ induced on the boundary of a model pseudoconvex domain $\Omega \subset  \mathbb{C}^2$ by the real and imaginary parts of the CR vector field on $\b\Omega$. In particular, we seek to understand the volume growth of the metric balls $B_d(\mathbf{p},\delta)$ when $\Omega$ is of the form
	\begin{equation*}
		\Omega=\{ (z_1,z_2)\in \mathbb{C}^2\ :\ {\rm Im}(z_2)>P(z_1)\},
	\end{equation*}
	where $P:\mathbb{C}\to \mathbb{R}$ is smooth, subharmonic, and non-harmonic. Under mild non-degeneracy conditions on $\Delta P$ is it known (\cite{Montanari2012,Nagel1988,Nagel1989,Nagel1985}) that for $\delta \leq 1$ the metric ball $B_d(\mathbf{p},\delta)$ is comparable to a `shorn' or `twisted' ellipsoid with radius $\delta$ in the directions spanned by the real and imaginary parts of the CR vector field and radius $\Lambda((z_1,z_2),\delta)$ in the $\Re(z_2)$-direction.  If we equip $\b\Omega$ with the Lebesgue measure $dm(z,t)$ that it receives via its identification with $\mathbb{C} \times \mathbb{R}$ given by $(z_1,z_2) \mapsto (z,t)$ where $z=z_1=x+iy$ and $t=\Re(z_2)$, then this small CC metric ball has volume comparable to that of the twisted ellipsoid:
	\begin{equation}
		Vol(B_d(\mathbf{p},\delta))\approx \delta^2 \Lambda(\mathbf{p},\delta). \label{vol}
	\end{equation}
	We build on the earlier work  of the second author \cite{Peterson2014} which sought to understand the possible rate of growth of $Vol(B_d(\mathbf{p},\delta))$ for model domains $\Omega$ such that when $\delta$ is large, the Euclidean radius
	\begin{equation*}
		\Lambda((z_1,z_2),\delta)=\sup \{ |\Re(z_2'-z_2)|\ :\ d((z_1,z_2),(z_1,z_2'))<\delta \}
	\end{equation*}
	of $B_d((z_1,z_2),\delta)$ in the $\Re(z_2)$-direction is essentially independent of $(z_1,z_2)$. The quantity $\Lambda(\mathbf{p},\delta)$ is called the \textit{global structure} of $\b\Omega$, and we make precise the $(z_1,z_2)$-independence condition described above with the following definition.
	\begin{definition}
		If there exists $\delta_0 > 0$, a function $f : [\delta_0, +\infty) \to [0,+\infty)$, and positive constants $0<c<C<+\infty$ such that $cf(\delta) \leq \Lambda(\mathbf{p}, \delta) \leq Cf(\delta)$ for all $\delta \geq \delta_0$ and $\mathbf{p} \in \b\Omega$, then we say that $(f(\delta),\delta_0)$ is a \textit{uniform global structure} or UGS for $\b\Omega$.
	\end{definition}
	For such domains $\Omega$ we also have \eqref{vol} when $\delta$ is large (see Remark \ref{Remark vol}), and therefore the volume growth of CC metric balls of any size is completely understood once we understand $\Lambda(\mathbf{p},\delta)$ for large $\delta$.\smallskip
		
	\begin{example}{\rm In \cite{Nagel1988}, it is shown that when $P(z_1)$ is a subharmonic, nonharmonic polynomial (and where $\Delta P$ has degree $m-2$), $$\Lambda((z_1,z_2),\delta) \approx \sum_{k=0}^{m-2} \Bigg(\sum_{\alpha=0}^k \Big| \frac{\partial^k \Delta P}{\partial z_1^\alpha \partial \bar{z}_1^{k-\alpha}}(z_1)\Big|\Bigg) \delta^{k+2}.$$
	
	In particular, when $P(z_1)=|z_1|^2$ (so that $\Delta P(z_1)\equiv 4$) we have $\Lambda((z_1,z_2),\delta)\approx 4\delta^2$, and therefore $(\delta^2,1)$ is a uniform global structure for ${\rm b}\Omega$.
	
	On the other hand, if $P(z_1)=|z_1|^4$, then $\Lambda((z_1,z_2),\delta)\approx |z_1|^2\delta^2+|z_1|\delta^3+\delta^4\approx (|z_1|+\delta)^2\delta^2$, and therefore is not uniform in $z_1\in \mathbb{C}$.  This shows that ${\rm b}\Omega$ has no uniform global structure.  More generally, if $P$ is a subharmonic, non-harmonic polynomial, then ${\rm b}\Omega$ does not have a uniform global structure when $\Delta P$ is not constant.
	}\end{example}	
	
	The following result from \cite{Peterson2014} controls the growth of uniform global structures.
	\begin{theorem}[\cite{Peterson2014} Theorem 1.2]
		If $\b\Omega$ has a UGS $(f(\delta),\delta_0)$, then there are positive constants $0<c<C<+\infty$ such that $c\delta \leq f(\delta) \leq C\delta^2$ for all $\delta \geq \delta_0$. \label{thm 1.2.}
	\end{theorem}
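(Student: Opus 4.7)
My plan is to prove the two inequalities separately, both based on a Stokes-type identity relating the change in the $t$-coordinate along a CC curve to the integral of $\Delta P$ over the region enclosed by the curve's $z_1$-projection, combined with the planar isoperimetric inequality. Writing the CR vector field as $L = X + iY$, a direct computation in boundary coordinates $(x,y,t)$ gives $X = \tfrac{1}{2}\partial_x + \tfrac{1}{2}P_y\,\partial_t$ and $Y = -\tfrac{1}{2}\partial_y + \tfrac{1}{2}P_x\,\partial_t$, so $[X,Y] = \tfrac{1}{4}(\Delta P)\,\partial_t$; for any unit-speed CC curve $\gamma$ one checks $\dot{t} = \dot{x}P_y - \dot{y}P_x$, and Green's theorem yields
\begin{equation*}
	t(\delta) - t(0) \;=\; -\int_{D_\gamma}\Delta P\,dA
\end{equation*}
whenever the $z_1$-projection of $\gamma$ closes up, where $D_\gamma$ is the (possibly multiply-wound) enclosed region. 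Since the Euclidean length of the $z_1$-projection is $\delta/2$, the isoperimetric inequality bounds the signed area of $D_\gamma$ by $\delta^2/(16\pi)$.

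For the lower bound $f(\delta) \geq c\delta$, I would use non-harmonicity to fix $z_1^* \in \mathbb{C}$ and $\rho_* > 0$ with $\Delta P \geq c_0 > 0$ on $\overline{D(z_1^*, \rho_*)}$. Horizontally lifting the circle $\partial D(z_1^*,\rho_*)$ produces a CC curve $\gamma_0$ starting at $\mathbf{p}^* = (z_1^*, 0)$ of length $L_0 = 4\pi\rho_*$, whose endpoint $(z_1^*, T_0)$ satisfies $|T_0| \geq c_0\pi\rho_*^2$. Because $\gamma_0$ is a loop in the $z_1$-plane, translation invariance in $t$ lets me concatenate $n$ copies end-to-end into a CC curve of length $nL_0$ and $t$-displacement $nT_0$. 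Choosing $n = \lfloor \delta/L_0 \rfloor$ gives $\Lambda(\mathbf{p}^*, \delta) \geq c\delta$ for $\delta \geq 2L_0$, and the UGS inequality $\Lambda(\mathbf{p}^*, \delta) \leq Cf(\delta)$ upgrades this to $f(\delta) \geq c'\delta$ for large $\delta$.

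For the upper bound $f(\delta) \leq C\delta^2$, the strategy is first to deduce from UGS that $\Delta P$ is bounded on $\mathbb{C}$. At any $\mathbf{p} = (z_1, t_0)$, running the same loop construction with radius $\rho$ small enough that $\Delta P \geq \Delta P(z_1)/2$ on $D(z_1, \rho)$ yields $\Lambda(\mathbf{p}, 4\pi\rho) \geq \tfrac{1}{2}\pi\rho^2 \Delta P(z_1)$. Fixing such a radius $\rho_0$ independently of $z_1$ (see below) and setting $\delta_0 = 4\pi\rho_0$, the UGS upper bound then forces
\begin{equation*}
	\Delta P(z_1) \;\leq\; \frac{2Cf(\delta_0)}{\pi\rho_0^2} \;=:\; M
\end{equation*}
for every $z_1 \in \mathbb{C}$. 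With $\Delta P$ uniformly bounded by $M$, the Stokes identity combined with the isoperimetric bound gives $\Lambda(\mathbf{q}, \delta) \leq M\delta^2/(16\pi)$ at every $\mathbf{q}$ and every $\delta$, and the lower half of UGS then yields $f(\delta) \leq C\delta^2$.

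The main obstacle is selecting the radius $\rho_0$ uniformly in $z_1$: the disk on which $\Delta P \geq \tfrac{1}{2}\Delta P(z_1)$ could a priori shrink as $\Delta P(z_1)$ grows. My intended workaround is to replace the pointwise lower bound by the averaged estimate $\Lambda(\mathbf{p}, 4\pi\rho) \geq \int_{D(z_1, \rho)}\Delta P\,dA$, which requires no continuity hypothesis, and then argue by contradiction: if $\Delta P$ is unbounded, I would produce a sequence of base points at which the averaged $\Delta P$-mass on a fixed-radius disk diverges, contradicting the uniform UGS bound $\Lambda(\mathbf{p}, \delta_0) \leq Cf(\delta_0)$. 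Once that step is in place, the remaining arguments are routine.
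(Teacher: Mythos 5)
This theorem is quoted from \cite{Peterson2014} rather than proved in the present paper, so I am comparing your proposal against the machinery the paper builds around it (Theorem \ref{thm 1.1.}, Remark \ref{Remark penball}, Lemma \ref{lemma 4.1.}). Your lower-bound half is essentially correct and is the standard argument: traverse a lifted loop around a fixed disc on which $\Delta P\geq c_0>0$ about $\lfloor\delta/L_0\rfloor$ times, then use the inequality $\Lambda(\mathbf{p}^*,\delta)\leq Cf(\delta)$ at that single base point. (Two small repairs: the base point must actually lie on the projected loop, so either base the curve on the circle or insert a spoke from $z_1^*$; and the range $\delta_0\leq\delta\leq 2L_0$ is handled by monotonicity of $\Lambda$ in $\delta$ after adjusting constants.)

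The upper bound, however, contains a genuine gap: your pivotal intermediate claim that a UGS forces $\sup_{\mathbb{C}}\Delta P<+\infty$ is false, and the proposed rescue does not close the hole. Take $\Delta P=1+\sum_k k\,\phi_k$ with $\phi_k$ a unit-height bump supported in a disc of radius $k^{-2}$ about widely separated points $z_k$: the $k$-th spike carries total mass $\approx k^{-3}$, so every disc $B(z,\rho)$ has mass between $\pi\rho^2$ and $\pi\rho^2+O(\min(1,k\rho^2))$, the hypotheses of Theorem \ref{quadratic} hold, and $(\delta^2,\delta_0)$ is a UGS even though $\sup\Delta P=+\infty$. In particular, ``$\Delta P$ unbounded $\Rightarrow$ the averaged mass on a fixed-radius disc diverges along some sequence of centers'' is a non sequitur: tall spikes on tiny sets are invisible to ball averages, which is exactly why Lemma \ref{lemma 4.1.} is stated in averaged form. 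Moreover, once you retreat to the averaged bound, your single-curve isoperimetric estimate no longer assembles into a proof: you must control $\int\Delta P(w)\,n(\gamma,w)\,dm(w)$, and the winding number $n$ can be of size $\delta/\varepsilon$ on a disc of radius $\varepsilon$, so $\|n\|_\infty\cdot\int_{B(z,\delta)}\Delta P$ is not $O(\delta^2)$. The decomposition that works is the stockyard one: from the UGS one first proves $\int_{B(\hat z,\rho)}\Delta P\lesssim\rho+\rho^2$ uniformly (for $\rho$ small by placing $\lfloor\delta_0/(2\pi\rho)\rfloor$ copies of the disc in a single $\delta_0$-stockyard, which yields the \emph{linear} small-scale bound; for $\rho$ large by covering $B(\hat z,\rho)$ with $\approx\rho^2$ discs of a fixed radius), and then, since each pen $R_i$ of a $(z,\delta)$-stockyard satisfies $R_i\subseteq B(z_i,\L(\b R_i))$ by Remark \ref{Remark penball}, the collected mass is at most $\sum_i\bigl(\L(\b R_i)+\L(\b R_i)^2\bigr)\leq\delta+\delta^2\lesssim\delta^2$, whence $f(\delta)\leq c^{-1}\Lambda(z,\delta)\lesssim\delta^2$.
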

	So when $\b\Omega$ has a UGS and $\delta \gg 1$, the global structure at any point grows at least linearly and at most quadratically in $\delta$. Examples are given in \cite{Peterson2014} where $\b\Omega$ has a UGS linear in $\delta$ and quadratic in $\delta$. Our question is whether there exist examples where the UGS grows somewhere ``between" linear and quadratic. For example, are there examples for $\b\Omega$ with UGS $(\delta^{\frac{3}{2}},\delta_0)$ or $(\delta \log\delta, \delta_0)$?\smallskip

	\begin{example}{\rm To see that this question is not trivial, fix $\alpha\in \left(0,\frac{2}{3}\right)$ and choose a subharmonic function $P:\mathbb{C}\to\mathbb{R}$ such that $\Delta P(z)=(1+|z|^2)^{-\alpha/2}$. Using our techniques and those of \cite{Peterson2014} one can show that there exist constants $0<c<C<+\infty$ such that for all $\delta>0$, $$c\delta^{2-\alpha}\leq \Lambda((0,0),\delta)\leq C\delta^{2-\alpha}\quad\mbox{and}\quad \Lambda((\delta^{\frac{3}{2}},0),\delta)\leq C\delta^{2-3\alpha/2}.$$
	Thus $\Lambda((0,0),\delta)$ grows at a rate comparable to $\delta^{2-\alpha}$, but $\Lambda((\delta^{\frac{3}{2}},0),\delta)$ grows no faster than $\delta^{2-3\alpha/2}$. This illustrates that it is possible for the global structure to grow (in $\delta$) at non-polynomial rates, but (since $\alpha<\frac{3}{2}\alpha$) not necessarily uniformly in the base point $(z_1,z_2)$.
	}\end{example}
	
	Our first main theorem (proven in Section \ref{S dichot}) answers our question negatively.
	\begin{theorem}
		If $\b\Omega$ has UGS $(f(\delta),\delta_0)$, then either $(\delta^2,\delta^*)$ or $(\delta,\delta^*)$ is a UGS for $\b\Omega$ for some $\delta^* > 0$. \label{dichotomy}
	\end{theorem}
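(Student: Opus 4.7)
The strategy is to show that UGS forces $f$ into an extremal asymptotic behavior via a scaling argument. The key analytic input is a Stokes-type identity: if $\gamma$ is a horizontal path on $\b\Omega$ whose projection $\bar\gamma\subset\mathbb{C}$ is a closed loop based at $z_1$, then the $t$-displacement along $\gamma$ equals $\int_D \Delta P\,dA$, where $D$ is the region enclosed by $\bar\gamma$, while the CC-length of $\gamma$ equals the Euclidean length of $\bar\gamma$. Consequently, $\Lambda(z_1,\delta)$ is the extremal value of $\int_D\Delta P$ over regions $D$ enclosed by loops at $z_1$ of length $\leq \delta$.

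Set $L := \limsup_{\delta\to\infty} f(\delta)/\delta^2$, which lies in $[0,C]$ by Theorem \ref{thm 1.2.}. I would handle the case $L > 0$ first. Choose a sequence $\delta_k\to\infty$ with $f(\delta_k)\geq (L/2)\delta_k^2$. UGS supplies, for each basepoint $w\in\mathbb{C}$, a sub-region $D_w$ of diameter $\lesssim \delta_k$ with $\int_{D_w}\Delta P \gtrsim f(\delta_k)$. I would then construct, for each $z_1$, a single loop of length $\sim n\delta_k$ whose enclosed region $D$ packs $\sim n^2$ essentially-disjoint copies of such $D_{w_i}$; summing via $\Delta P \geq 0$ yields $f(n\delta_k)\gtrsim n^2 f(\delta_k)\gtrsim n^2\delta_k^2$. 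This propagates the bound $f(\delta)\gtrsim \delta^2$ to all $\delta\geq \delta^*$, and combined with Theorem \ref{thm 1.2.} gives $f(\delta)\approx \delta^2$.

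For the case $L = 0$, the target is the upper bound $f(\delta)\lesssim \delta$. Here the core observation is that sub-quadratic uniform growth of $\Lambda$ precludes any ``two-dimensional'' packing of optimal sub-regions; the extremal regions $D_w$ must be effectively one-dimensional (strip-like) rather than ball-like. I would convert this shape restriction into an analytic bound by exploiting the pointwise estimate $|\Delta t|\leq |\bar\gamma|\cdot\sup_{z\in\bar\gamma}|\nabla P(z)|$ together with uniformity arguments across all basepoints, aiming to derive $f(\delta)\lesssim\delta$. With Theorem \ref{thm 1.2.}, this would give $f(\delta)\approx\delta$.

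The main obstacle I anticipate is the $L=0$ branch: sub-quadratic growth (such as $f(\delta)\sim \delta^{3/2}$) is not a priori excluded by Theorem \ref{thm 1.2.} alone, so the upper bound $f(\delta)\lesssim \delta$ requires genuinely new input beyond the scaling inequality used in the $L>0$ case. Making precise the dichotomy of optimal sub-region shapes --- or equivalently, rigorously extracting from sub-quadratic uniform growth of $\Lambda$ an analytic condition on $\nabla P$ strong enough to force linear growth --- is the technical heart of the theorem, and likely requires an isoperimetric-type argument combined with the uniformity of UGS across all $z_1$.
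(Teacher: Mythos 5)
Your $L>0$ branch is essentially sound and matches the paper's mechanism (the packing Lemma \ref{ball packing}), but it already contains a gap: the assertion that UGS supplies, for each basepoint $w$, a single region $D_w$ of diameter $\lesssim\delta_k$ with $\int_{D_w}\Delta P\gtrsim f(\delta_k)$ is not automatic. Proposition \ref{alt UGS} only produces a ball $B(\hat z,\hat\delta)$ with $\frac{\delta_k}{\hat\delta}\int_{B(\hat z,\hat\delta)}\Delta P\gtrsim f(\delta_k)$, so the mass of the region itself is only $\gtrsim\frac{\hat\delta}{\delta_k}f(\delta_k)$; a stockyard can collect mass $f(\delta_k)$ by wrapping many times around a small, very dense ball, in which case no single region of that mass exists. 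To repair this one must invoke the upper density bound, Lemma \ref{lemma 4.1.}(b), to force $\hat\delta$ to be bounded below (the paper only needs $\hat\delta\geq 1$, which already guarantees a fixed amount of mass $2C_2$ in every ball of fixed radius $2\delta_1$, and that is all the packing argument requires).

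The decisive problem is your $L=0$ branch, which you acknowledge you cannot close: you must exclude intermediate growth such as $f(\delta)\approx\delta^{3/2}$, and neither the pointwise gradient bound (irrelevant here, since the displacement is $\int_D\Delta P$ and $\nabla P$ is unbounded in general) nor an unspecified isoperimetric argument delivers the upper bound $f(\delta)\lesssim\delta$. The paper avoids this entirely by splitting the dichotomy differently: either $(\delta,\delta^*)$ is a UGS, or $\limsup_{\delta\to\infty}f(\delta)/\delta=+\infty$. In the second case one fixes a single scale $\delta_1$ at which $f(\delta_1)/\delta_1$ exceeds the explicit threshold $4C_2/c$ (with $C_2$ from Lemma \ref{lemma 4.1.}(b) and $c$ from Proposition \ref{alt UGS}); part (b) of Lemma \ref{lemma 4.1.} then rules out $\hat\delta\leq 1$, so $\int_{B(\hat z,\hat\delta)}\Delta P\geq 2C_2$ and hence every ball $B(z,2\delta_1)$ carries mass at least $2C_2$. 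Packing $\gtrsim(\delta/\delta_1)^2$ disjoint such balls into $B(z,2\delta)$ via Lemma \ref{ball packing} yields $f(\delta)\gtrsim\delta^2$ for all $\delta\geq 2\delta_1$, and Theorem \ref{thm 1.2.} supplies the matching upper bound. In short, superlinearity at one scale already triggers the quadratic packing mechanism at all larger scales, so the intermediate case never occurs; your case division on $\limsup f(\delta)/\delta^2$ conceals exactly this point and leaves the hard case unproven.
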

	We subsequently give necessary and sufficient conditions on $\b\Omega$ for both linear (Theorem \ref{linear}) and quadratic (Theorem \ref{quadratic}) growth of the UGS, thereby completely describing the conditions under which any particular model domain has a uniform global structure.\smallskip
		
	The volume growth of CC metric balls in model domains $\Omega$ as above for large $\delta$ is only explicitly understood when $P$ is a subharmonic, non-harmonic polynomial \cite{Nagel1988} or in the limited examples considered in \cite{Peterson2014} mentioned above. In some situations one can obtain upper bounds for the rate of volume growth (see \cite{Chang2014}), but one cannot hope for precise control of $Vol(B_d(\mathbf{p},\delta))$ for general $P$. On the other hand, applications of volume growth estimates are many and varied; for example, one can use these estimates to identify spaces of homogeneous type \cite{Coifman1977}, study singular integral operators \cite{Street2014}, and even to decide whether or not the boundaries of two model domains are quasi-conformally equivalent \cite{Faessler2014,Heinonen1998}.\smallskip
	
	Our paper is structured as follows. Section \ref{S prelim} gives relevant definitions and notation that will be used extensively throughout the paper and recalls past results. In Section \ref{S altdesc} we gain some intuition about how a UGS behaves and prove a key and explicit alternative characterization of the UGS. In Section \ref{S dichot} we prove Theorem \ref{dichotomy}, followed in Section \ref{S identify} by necessary and sufficient conditions for a given model domain to possess a uniform global structure. Section \ref{S future} concludes the paper and offers future directions of study.
	
\section{Preliminaries} \label{S prelim}
	With $\Omega$ as in the introduction, the space of tangential CR vector fields on $\b\Omega$ is spanned by
	\begin{equation*}
		\bar{Z} = 2 \frac{\partial}{\partial\bar{z}_1} - 4i P_{\bar{z}_1}(z_1) \frac{\partial}{\partial\bar{z}_2}.
	\end{equation*}
	We identify $\b\Omega$ with $\mathbb{C} \times \mathbb{R}$ via the diffeomorphism $(z_1,z_2) \mapsto (z,t) \in \mathbb{C} \times \mathbb{R}$ where $z = z_1 = x + iy$ and $t = \Re(z_2)$. Under this transformation, $\bar{Z}$ becomes
	\begin{equation*}
		\bar{Z} = 2\frac{\partial}{\partial \bar{z}} - 2iP_{\bar{z}}(z)\frac{\partial}{\partial t} = \left( \frac{\partial}{\partial x} + P_y(x,y) \frac{\partial}{\partial t} \right) - i\left( -\frac{\partial}{\partial y} + P_x(x,y) \frac{\partial}{\partial t} \right) \define X-iY.
	\end{equation*}
	As stated in the introduction, we give $\b\Omega$ the Lebesgue measure $dm(z,t)$ that it receives upon identification with $\mathbb{C} \times \mathbb{R}$. For the rest of the paper, we work on $\mathbb{C} \times \mathbb{R}$ instead of $\b\Omega$ to simplify notation.\smallskip
	
	We define the CC metric $d:(\mathbb{C} \times \mathbb{R}) \times (\mathbb{C} \times \mathbb{R}) \to [0,+\infty)$ by
	\begin{align}
		d(\mathbf{p},\mathbf{q}) = \inf \Big\{ \delta > 0\ :\ \exists &\gamma:[0,1] \to \mathbb{C}\times\mathbb{R}, \gamma(0)=\mathbf{p}, \gamma(1)=\mathbf{q},\notag\\
		& \gamma'(s) = \delta \alpha(s)X(\gamma(s)) + \delta \beta(s)Y(\gamma(s))\ a.e.,\notag\\
		& \alpha,\beta \in FPWS[0,1],\ |\alpha(s)|^2+|\beta(s)|^2<1\ a.e. \Big\}. \label{metric}
	\end{align}
	Here $FPWS[0,1]$ (read ``finite piecewise smooth") denotes the set of functions $f:[0,1] \to \mathbb{R}$ which are smooth except at a finite number of points and whose derivatives extend continuously to those points from each side separately.\smallskip
	
	The global structure $\Lambda((z,t),\delta)$, the radius in the $t$-direction of the CC ball, is then defined as
	\begin{equation}
		\Lambda((z,t),\delta) \define \sup \left\{ |t'-t|\ :\ d( (z,t),(z,t') ) < \delta \right\}. \label{glbl struc def}
	\end{equation}
	Note that the quantity \eqref{glbl struc def} is actually \textit{independent} of the $t$-coordinate because the solutions to the differential equation in \eqref{metric} are translation invariant in $t$. To simplify notation, we will therefore write $\Lambda(z,\delta)$ instead of $\Lambda((z,t),\delta)$ for the remainder of the paper, treating $\Lambda$ as a function from $\mathbb{C} \times (0,+\infty) \mapsto [0,+\infty)$. The first observation of \cite{Peterson2014} is that definition \eqref{glbl struc def} is in fact equivalent to the following statement in terms of curves in $\mathbb{C}$, independent of $t$:
	\begin{align}
		\Lambda(z,\delta) = \sup \Big\{ \oint_\gamma P_y(\gamma)dx-P_x(\gamma)dy\ :\ & \gamma:[0,1] \to \mathbb{C}, \gamma(0)=\gamma(1)=z, |\gamma'(s)| \leq \delta\ a.e.,\notag\\
		& \gamma'(s)=\alpha(s)+i\beta(s),\ \alpha,\beta \in FPWS[0,1] \Big\}. \label{alt glbl struc def}
	\end{align}
	We write $L(\gamma)=\int_a^b |\gamma'(s)|ds$ for the usual Euclidean length of a piecewise smooth curve $\gamma:[a,b] \to \mathbb{C}$. The following geometric definition from \cite{Peterson2014} will be essential to our understanding of global structures.
	\begin{definition}
		We say $A \subset \mathbb{C}$ is a \textit{pen} if $A$ is open, connected, simply connected, and if $\b A$ can be parametrized by a continuous piecewise smooth curve $\gamma:[0,1]\to\mathbb{C}$ with $\gamma'(s)=\alpha(s)+i\beta(s)$ where $\alpha,\beta \in FPWS[0,1]$. We call $\L(\b A)=\L(\gamma)$ the amount of \textit{fencing} used to enclose A. For a fixed $z \in \mathbb{C}$ and $\delta>0$, we say that a finite collection of pens $R=(R_1, \dotsc, R_N)$ is a \textit{$(z, \delta)$-stockyard} if
		\begin{center}
			$\displaystyle{z \in \bigcup_{i=1}^N \b R_i}$, \quad
			$\displaystyle{\sum_{i=1}^N \L(\b R_i) \leq \delta}$, \quad and \quad
			$\displaystyle{\bigcup_{i=1}^N \b R_i}$ is connected.
		\end{center}
	\end{definition}
	\begin{remark}
		We will often use in this paper the fact that given a pen $A$, $A \subseteq B(z,\L(\b A))$ for any point $z \in A$, where $B(z,\rho)$ denotes the open Euclidean disc in $\mathbb{C}$ of radius $\rho$ centered at $z$.\label{Remark penball}
	\end{remark}
	Thinking of global structures in terms of \eqref{alt glbl struc def}, \cite{Peterson2014} provides the following theorem.
	\begin{theorem}[\cite{Peterson2014} Theorem 1.1]
		$\Lambda(z,\delta) = \sup\limits_{(z,\delta)-stockyards\ R}\ {\displaystyle \sum\limits_{R_i \in R}\ \int\limits_{R_i} \Delta P(w)dm(w)}$. \label{thm 1.1.}
	\end{theorem}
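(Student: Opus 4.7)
The plan is to use the alternative formula \eqref{alt glbl struc def} together with a winding-number form of Green's theorem, namely $\oint_\gamma P_y\, dx - P_x\, dy = -\int_{\mathbb{C}} W_\gamma(w)\, \Delta P(w)\, dm(w)$, where $W_\gamma$ denotes the winding number of $\gamma$; this converts both sides of the target equality into statements about how closed curves and collections of simply connected pens interact topologically in $\mathbb{C}$.

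For the $(\geq)$ direction I would begin with an arbitrary $(z,\delta)$-stockyard $(R_1,\ldots,R_N)$, orient each boundary $\partial R_i$ clockwise, and form the directed multigraph whose edges are the resulting oriented arcs. At every vertex the in-degree equals the out-degree, since each $\partial R_i$ passes through any such vertex as a cycle, and the underlying undirected graph is connected by the stockyard axiom. Hence an Eulerian circuit exists and can be chosen to start at $z \in \bigcup_i \partial R_i$; calling this circuit $\gamma$, one has $\L(\gamma) = \sum_i \L(\partial R_i) \leq \delta$ and $W_\gamma = -\sum_i \mathbf{1}_{R_i}$ (since each $R_i$ is simply connected and its boundary is traversed clockwise), so Green's theorem gives $\oint_\gamma P_y\, dx - P_x\, dy = \sum_i \int_{R_i} \Delta P\, dm$.

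For the $(\leq)$ direction I would start from an admissible $\gamma$ and manufacture stockyards whose $\Delta P$-mass approaches $\oint_\gamma P_y\, dx - P_x\, dy$. View $\Gamma = \operatorname{image}(\gamma)$ as a planar graph with bounded faces $\{F_j\}$ and winding numbers $n_j := W_\gamma|_{F_j}$; writing $n_j^- = \max(-n_j,0)$ and $A_k = \{w : W_\gamma(w) \leq -k\}$, the nonnegativity of $\Delta P$ gives
\[
\oint_\gamma P_y\, dx - P_x\, dy \;\leq\; \sum_j n_j^- \int_{F_j} \Delta P\, dm \;=\; \sum_{k \geq 1} \int_{A_k} \Delta P\, dm.
\]
For each $k$ I would take the connected components of $A_k$ and replace each by its simply connected hull to obtain pens $R_{k,i}$; since $\Delta P \geq 0$, filling in holes only enlarges the contribution. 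The key length estimate is combinatorial: for any edge $e$ of $\Gamma$ lying between faces of windings $n_L \leq n_R$, $e$ is in $\partial A_k$ for exactly $n_L^- - n_R^-$ values of $k$, and this quantity is $\leq |n_L - n_R| \leq m_e$, where $m_e$ is the total number of traversals of $e$ by $\gamma$; summing $\L(e) \cdot |\{k : e \in \partial A_k\}|$ over edges therefore yields $\sum_{k,i} \L(\partial R_{k,i}) \leq \L(\gamma) \leq \delta$.

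The principal difficulty is that the family $\{R_{k,i}\}$ produced above need not meet the two topological requirements of a stockyard: the union of the pen boundaries can be disconnected, and nothing forces $z$ to lie on it. To remedy this I would augment the collection with thin strip-shaped pens traced along arcs of $\Gamma$ that connect the various $R_{k,i}$ and pass through $z$. Such strips contribute arbitrarily little $\Delta P$-mass but do cost boundary length, so fitting them within the length budget requires a preliminary approximation of $\gamma$ by a strictly shorter admissible curve and then passing to the limit as the strip widths tend to zero; this works because the right-hand side of the theorem is a supremum rather than a maximum, allowing the approximating stockyards' $\Delta P$-sums to converge to $\oint_\gamma P_y\, dx - P_x\, dy$.
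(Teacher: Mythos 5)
Your $(\geq)$ direction is essentially right: orienting each pen boundary clockwise and concatenating the boundaries into a single closed curve based at $z$ produces an admissible competitor for \eqref{alt glbl struc def} of length $\sum_i \L(\partial R_i)\leq\delta$ whose line integral equals $\sum_i\int_{R_i}\Delta P\,dm$. (The Eulerian-circuit framing tacitly assumes $\bigcup_i\partial R_i$ is a finite graph, which can fail if two boundaries overlap badly; recursively splicing each $\partial R_i$ into the curve at a single point where it meets the already-traversed boundaries, which exists by the connectedness axiom, avoids this.)

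The $(\leq)$ direction has a genuine gap, located exactly where you flag a ``principal difficulty'': the connecting strips do not fit in the length budget, and the proposed remedy cannot repair this. A strip of vanishing width along an arc $\alpha$ still has perimeter tending to $2\L(\alpha)$, so the connectors cost roughly twice the length of whatever connected set links $z$ to all the pens, while shortening $\gamma$ in a way that preserves $\oint_\gamma$ in the limit frees only an arbitrarily small amount of fencing. Concretely, let $\gamma$ traverse an equilateral triangle of side $s$ counterclockwise starting from the vertex $z$, inserting at each of the other two vertices a small clockwise circle $c_1$, $c_2$ lying outside the triangle. Then $W_\gamma=+1$ on the triangle and $-1$ on the open discs bounded by $c_1$ and $c_2$, so your pens are those two discs, with fencing $\L(c_1)+\L(c_2)$ and free budget $3s$; but any system of thin strips making $\{z\}\cup c_1\cup c_2$ connected costs at least $2\sqrt{3}\,s-o(1)>3s$, since any connected set containing the three vertices has length at least $\sqrt{3}\,s$. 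If $\Delta P$ is concentrated inside $c_1\cup c_2$, the two discs cannot be shrunk either, so your construction genuinely exceeds the budget. The structural problem is that you discard the faces of positive winding number and must then pay separately to reconnect what remains. The repair is to keep them: decompose $\gamma$, viewed as a $1$-cycle in the multigraph of its image, into edge-disjoint simple closed subloops $\sigma_j$, and reverse the orientation of every counterclockwise $\sigma_j$; this only increases the line integral because $\Delta P\geq 0$. The enclosed discs $D_j$ are then pens with $\sum_j\L(\partial D_j)=\L(\gamma)\leq\delta$ and $\sum_j\int_{D_j}\Delta P\,dm\geq\oint_\gamma P_y\,dx-P_x\,dy$, and, crucially, $\bigcup_j\partial D_j$ is the entire image of $\gamma$, hence automatically connected and containing $z$. (Degenerate back-and-forth subloops and non-generic self-intersections still require a small perturbation plus the limiting argument you describe, which does work for that purpose.)
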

	Here $dm(\cdot)$ denotes the Lebesgue measure on $\mathbb{C}$. The problem of calculating the global structure, an inherently three dimensional problem, is therefore reduced to a question in two dimensions. Furthermore, notice that because $P$ was assumed to be subharmonic and non-harmonic, we can think of $\Delta P$ as a density function in the plane. In this context, integration over a pen measures the `mass' of the region covered by the pen, and integration over a stockyard is then the sum of the mass collected by the individual pens. The global structure $\Lambda(z,\delta)$ is then just the most mass one can collect with a stockyard touching $z$ constructed with at most $\delta$ amount of fencing.\smallskip
		
	To simplify notation in our estimates, we introduce the following notation. For two non-negative quantities $A$ and $B$, we write $A \lesssim B$ (read ``$A$ is controlled above by $B$") if there exists some constant $c>0$, independent of all relevant quantities, such that $A \leq cB$. We say $A \gtrsim B$ (read ``$A$ is controlled below by $B$") if $B \lesssim A$, and $A \approx B$ (read ``$A$ is comparable to $B$") if both $A \lesssim B$ and $B \lesssim A$.

\section{Alternate description of uniform global structures} \label{S altdesc}
	When $\b\Omega$ has a UGS $(f(\delta),\delta_0)$ and when $\delta \geq \delta_0$, we expect that for every point $z$ in the plane we can find a high density region whose distance from the point is no more than $\delta$. We should then be able to construct a $(z,N\delta)$-stockyard for an appropriately fixed natural number $N$ which covers this region with one or more pens. Otherwise $\Lambda(z,\delta)$ would be vanishingly small at certain points. We also expect that no point should be within $\delta$ of a region of exceedingly high density. Otherwise $\Lambda(z,\delta)$ would be exceedingly large at certain points. Before we make this notion precise in Proposition \ref{alt UGS} of this section, we need two lemmas.\smallskip
	
	A simple observation about one formula for a UGS is the following.
	\begin{lemma}
		If $\b\Omega$ has UGS $(f(\delta),\delta_0)$, then $\left(\sup_{z \in \mathbb{C}} \Lambda(z,\delta), \delta_0\right)$ is also a UGS for $\b\Omega$. \label{sup UGS}
	\end{lemma}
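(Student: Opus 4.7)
The plan is to verify directly that $g(\delta) \define \sup_{z \in \mathbb{C}} \Lambda(z, \delta)$ satisfies the definition of a uniform global structure, using the two-sided comparison built into the hypothesis that $(f(\delta), \delta_0)$ is a UGS. The whole argument is a short sandwiching computation; there is no real obstacle beyond bookkeeping, and I will not need to invoke Theorem \ref{thm 1.1.} or any property of pens or stockyards.

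First I would confirm that $g$ is well-defined as a map $[\delta_0, +\infty) \to [0, +\infty)$. Since $(f, \delta_0)$ is a UGS, there exist constants $0 < c < C < +\infty$ with
\begin{equation*}
    cf(\delta) \leq \Lambda(z, \delta) \leq Cf(\delta)
\end{equation*}
for every $z \in \mathbb{C}$ and every $\delta \geq \delta_0$. Taking the supremum over $z$ on the upper side shows $g(\delta) \leq Cf(\delta) < +\infty$, so the supremum is finite; taking it on the lower side gives $g(\delta) \geq cf(\delta)$.

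Next I would produce constants $0 < c' < C' < +\infty$ witnessing $(g(\delta), \delta_0)$ as a UGS. The upper bound is immediate from the definition of the supremum: $\Lambda(z, \delta) \leq g(\delta)$ for every $z$ and $\delta \geq \delta_0$, so $C' = 1$ works. For the lower bound I would combine the two inequalities from the previous paragraph, writing
\begin{equation*}
    \Lambda(z, \delta) \geq cf(\delta) \geq \frac{c}{C} \cdot C f(\delta) \geq \frac{c}{C}\, g(\delta),
\end{equation*}
so $c' = c/C$ suffices. Together these inequalities are exactly the statement that $(g(\delta), \delta_0)$ is a UGS for $\b\Omega$, completing the proof.
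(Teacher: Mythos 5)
Your proof is correct and follows essentially the same sandwiching argument as the paper: bound $\sup_{z}\Lambda(z,\delta)$ between $cf(\delta)$ and $Cf(\delta)$ and then compare $\Lambda(z,\delta)$ to the supremum with constants $c/C$ and $1$ (the paper uses $C/c$ for the upper constant, an immaterial difference).
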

	\begin{proof}
		Fix some $z \in \mathbb{C}$. By the definition of UGS, there exist constants $c,C>0$ independent of $z$ and $\delta$ such that
		\begin{equation*}
			cf(\delta) \leq \Lambda(z,\delta) \leq Cf(\delta).
		\end{equation*}
		So $Cf(\delta)$ is an upper bound for $\{ \Lambda(z,\delta) : z \in \mathbb{C} \}$, which gives $\sup\limits_{z \in \mathbb{C}} \Lambda(z,\delta) \leq Cf(\delta)$	since the supremum is the least upper bound. Also $\sup\limits_{z \in \mathbb{C}} \Lambda(z,\delta) \geq \Lambda(z,\delta) \geq cf(\delta)$. So then
		\begin{equation*}
			\Lambda(z,\delta) \leq Cf(\delta) \leq \frac{C}{c} \sup\limits_{z \in \mathbb{C}} \Lambda(z,\delta),
		\end{equation*}
		and
		\begin{equation*}
			\Lambda(z, \delta) \geq c f(\delta) \geq \frac{c}{C} \sup\limits_{z \in \mathbb{C}} \Lambda(z,\delta)
		\end{equation*}
		for all $\delta \geq \delta_0$. Therefore $\left( \sup\limits_{z \in \mathbb{C}} \Lambda(z,\delta), \delta_0 \right)$ is a UGS for $\b\Omega$.
	\end{proof}
	Lemma \ref{sup UGS} makes it clear that we can take $f(\delta)$ to be a monotonically increasing function of $\delta$. We next show that $f(\delta)$ does not increase too quickly in the sense that if we double the amount of fencing available to construct stockyards, then the amount of mass one can collect should not grow exceedingly fast.
	\begin{lemma}
		If $\b\Omega$ has UGS $(f(\delta),\delta_0)$ then $f(\delta) \approx f(2\delta)$ for all $\delta \geq \delta_0$, with constants independent of $\delta$. \label{UGS growth}
	\end{lemma}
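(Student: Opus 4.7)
The inequality $f(\delta) \lesssim f(2\delta)$ is immediate: every $(z,\delta)$-stockyard is automatically a $(z,2\delta)$-stockyard (relaxing the bound $\sum \L(\b R_i) \leq \delta$ to $\leq 2\delta$ is trivial), so $\Lambda(z,\delta) \leq \Lambda(z,2\delta)$ for every $z$ and $\delta$, and then the UGS sandwich $cf(\delta) \leq \Lambda(z,\delta) \leq \Lambda(z,2\delta) \leq Cf(2\delta)$ gives $f(\delta) \leq (C/c)f(2\delta)$. The content of the lemma is the reverse: it suffices to show that for any $z \in \mathbb{C}$ one has $\Lambda(z,2\delta) \lesssim f(\delta)$, since combining with $cf(2\delta) \leq \Lambda(z,2\delta)$ then yields $f(2\delta) \lesssim f(\delta)$.

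The plan is a covering argument. First, the geometric confinement: any $(z,2\delta)$-stockyard $R = (R_1,\dotsc,R_N)$ lives entirely in $B(z,4\delta)$. Indeed the connected fence $U = \bigcup \b R_i$ has total length $\leq 2\delta$ and contains $z$, so $U \subseteq \overline{B(z,2\delta)}$; then Remark \ref{Remark penball} places each $R_i$ in a ball of radius $\L(\b R_i) \leq 2\delta$ about any of its boundary points, hence in $B(z,4\delta)$. Second, cover $\overline{B(z,4\delta)}$ by a universal number $K$ of open discs $D_j = B(p_j, \delta/(2\pi))$ (the ratio of radii is $8\pi$, so $K$ depends only on this constant). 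Each $D_j$ is a pen with $\L(\b D_j) = \delta$, so the single pen $(D_j)$ is a $(q_j,\delta)$-stockyard for any $q_j \in \b D_j$; the UGS at scale $\delta \geq \delta_0$ then gives
\[
\int_{D_j} \Delta P\,dm \;\leq\; \Lambda(q_j,\delta) \;\leq\; Cf(\delta),
\]
and summing yields $\int_{B(z,4\delta)} \Delta P \leq KCf(\delta)$. In the clean case where the pens of $R$ are pairwise disjoint, $\sum_i \int_{R_i}\Delta P = \int_{\bigcup R_i}\Delta P \leq KCf(\delta)$, and taking the supremum over $R$ closes the argument.

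The main obstacle I foresee is handling the sum $\sum_i \int_{R_i}\Delta P$ when pens overlap, since the stockyard formula counts mass in overlaps with multiplicity. My intended fix is to bound each pen individually using the same cover strategy: for $w_i \in \b R_i$ one has $R_i \subseteq B(w_i, L_i)$ by Remark \ref{Remark penball}, and covering $B(w_i, L_i)$ by $O((L_i/\delta)^2 + 1)$ discs of radius $\delta/(2\pi)$ produces $\int_{R_i}\Delta P \lesssim (1 + L_i/\delta)^2 f(\delta)$. Summing and using $\sum L_i \leq 2\delta$ together with $\sum L_i^2 \leq (\sum L_i)^2 \leq 4\delta^2$ controls the contribution of pens with $L_i$ comparable to or exceeding $\delta$. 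The genuinely delicate point is the regime of very small pens ($L_i < \delta_0$), where one cannot invoke the UGS at scale $L_i$ and the individual covering bound is wasteful; I expect this is handled either by a reduction (via a union/reorganization step) to disjoint stockyards, or by combining the isoperimetric estimate $\mathrm{area}(R_i) \leq L_i^2/(4\pi)$ with a local control on $\Delta P$ extracted from the covering bound $\int_{B(p,\delta/(2\pi))}\Delta P \leq Cf(\delta)$ itself.
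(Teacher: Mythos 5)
Your easy direction is fine, but the covering argument for $\Lambda(z,2\delta)\lesssim f(\delta)$ has a gap exactly where you suspect, and neither of your proposed fixes closes it. The root problem is multiplicity: a stockyard may contain many copies of the same pen (this is precisely the mechanism behind linear growth --- see the stockyard built from $\lfloor\delta/\hat{\delta}\rfloor$ copies of $B(\hat{z},\hat{\delta})$ in the proof of Proposition \ref{alt UGS}), so $\sum_i\int_{R_i}\Delta P\,dm$ can exceed $\int_{B(z,4\delta)}\Delta P\,dm$ by an unbounded factor. The ``clean case'' of disjoint pens is therefore not the general case, and no reorganization reduces to it: if $\Delta P$ is concentrated on a disc of radius $\epsilon$ with total mass $m$, wrapping that disc repeatedly collects on the order of $(\delta/\epsilon)m$, while any disjoint family collects at most $m$. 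Your per-pen covering bound $\int_{R_i}\Delta P\,dm\lesssim(1+L_i/\delta)^2f(\delta)$ is correct but does not decay as $L_i\to0$, so summing it costs a factor of $N$, and the number of pens $N$ is unbounded. The isoperimetric fallback would require an $L^\infty$ bound on $\Delta P$, which is not available: the covering estimate $\int_{B(p,\delta/2\pi)}\Delta P\,dm\leq Cf(\delta)$ controls averages at scale $\delta$ only, so $\mathrm{area}(R_i)\leq L_i^2/(4\pi)$ buys nothing for small pens where $\Delta P$ may spike.

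The paper sidesteps multiplicity entirely by never summing masses of regions: via the line-integral formulation \eqref{alt glbl struc def}, the stockyard sum equals $\oint_\gamma P_y\,dx-P_x\,dy$ for a \emph{single} closed FPWS curve $\gamma$ of length at most $3\delta$ based at $z_0$ (windings are encoded in $\gamma$ and cause no loss). One then splits $\gamma$ into three arcs of length $\leq\delta$, closes each with a chord to obtain three closed curves of length $\leq 2\delta$, and decomposes the leftover chord-triangle into four closed curves of length $\leq\frac{3}{2}\delta$ using its medial triangle; each of the seven circulation integrals is bounded by $\Lambda(\cdot,2\delta)\leq Cf(2\delta)$, giving $f(3\delta)\leq 7f(2\delta)$, and iterating (using $(3/2)^2\geq 2$) yields $f(2\delta)\leq 49f(\delta)$. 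The essential idea missing from your approach is this additivity of the circulation integral under decomposition of the boundary curve, which is what allows one to trade one long fence for several short ones while keeping exact track of multiplicity.
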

	\begin{proof}
		By Lemma \ref{sup UGS} we can without loss of generality take $f(\delta) = \sup\limits_{z \in \mathbb{C}} \Lambda(z,\delta)$. For if $(g(\delta),\delta_0)$ is any other UGS for $\b\Omega$ and we can prove the lemma for $f(\delta)$, then $g(\delta) \approx f(\delta) \approx f(2\delta) \approx g(2\delta)$. We prove first that $f(2\delta) \approx f(3\delta)$ for large $\delta$ and will show at the end of the proof that this is sufficient to establish the lemma.\smallskip
		
		Because $f(\delta)$ is a nondecreasing function, we trivially have $f(2\delta) \leq f(3\delta)$. We need only show then that $f(3\delta) \lesssim f(2\delta)$. To this end, fix $z_0 \in \mathbb{C}$ and $\delta \geq \frac{2}{3}\delta_0$, and let $R$ be any arbitrary $(z_0,3\delta)$-stockyard. There is a FPWS curve $\gamma:[0,1] \to \mathbb{C}$ with $\gamma(0) = \gamma(1) = z_0$, $\L(\gamma) \leq 3\delta$, and
		\begin{equation*}
			\sum_{R_i \in R} \int_{R_i} \Delta P(w)dm(w) = \oint_\gamma P_ydx-P_xdy.
		\end{equation*}
		We now produce seven continuous, piecewise smooth curves $\gamma_k : [0,1] \to \mathbb{C}$, $k=1,\dotsc,7,$ with $L(\gamma_k) \leq 2\delta$ and $\gamma_k'(s)=\alpha_k(s)+i\beta_k(s)$ with $\alpha_k,\beta_k \in FPWS[0,1]$ such that
		\begin{equation*}
			\oint_\gamma P_ydx - P_xdy = \sum_{k=1}^7 \oint_{\gamma_k} P_ydx - P_xdy.
		\end{equation*}
		Without loss of generality, suppose that $\gamma$ has constant speed so that
		\begin{equation}		
			\int_0^\frac{1}{3} |\gamma'(s)| ds = \int_\frac{1}{3}^\frac{2}{3} |\gamma'(s)| ds =  \int_\frac{2}{3}^1 |\gamma'(s)| ds \leq \delta. \label{length}
		\end{equation} 
		For convenience, we define $z_1 = \gamma(\frac{1}{3}), z_2 = \gamma(\frac{2}{3})$, and $z_3 = \gamma(1) = z_0$. We also denote by $\overrightarrow{z, w}$ the directed line segment from $z$ to $w$.\smallskip
		
		Now we have
		\begin{align}
			\oint_\gamma P_ydx-P_xdy & = \int_{\gamma[0,\frac{1}{3}]} P_ydx-P_xdy + \int_{\gamma[\frac{1}{3},\frac{2}{3}]} P_ydx-P_xdy + \int_{\gamma[\frac{2}{3},1]} P_ydx-P_xdy \notag\\
			& \hspace{5em} + \int_{\overrightarrow{z_0,z_1}} P_ydx-P_xdy + \int_{\overrightarrow{z_1,z_2}} P_ydx-P_xdy + \int_{\overrightarrow{z_2,z_3}} P_ydx-P_xdy \notag\\
			& \hspace{5em} + \int_{\overrightarrow{z_1,z_0}} P_ydx-P_xdy + \int_{\overrightarrow{z_2,z_1}} P_ydx-P_xdy + \int_{\overrightarrow{z_3,z_2}} P_ydx-P_xdy \notag\\
			& = \oint\limits_{\gamma[0,\frac{1}{3}]+\overrightarrow{z_1,z_0}} P_ydx-P_xdy + \oint\limits_{\gamma[\frac{1}{3},\frac{2}{3}]+\overrightarrow{z_2,z_1}} P_ydx-P_xdy + \oint\limits_{\gamma[\frac{2}{3},1]+\overrightarrow{z_3,z_2}} P_ydx-P_xdy \notag\\
			& \hspace{7em} + \oint\limits_{\overrightarrow{z_0,z_1} + \overrightarrow{z_1,z_2} + \overrightarrow{z_2,z_3}} P_ydx-P_xdy. \label{intsum}
		\end{align}
		We consider the contours of integration in each integral.\smallskip
		
		We define $\gamma_i = \gamma[\frac{i-1}{3},\frac{i}{3}]+\overrightarrow{z_i,z_{i-1}}$ for $i=1,2,3$. By \eqref{length}, the length of each contour $\gamma[\frac{i-1}{3}, \frac{i}{3}]$ is no more than $\delta$. And as the straight line between the endpoints of these contours, each directed line segment $\overrightarrow{z_i,z_{i-1}}$ also has length no more than $\delta$. In other words each $\gamma_i$ for $i=1,2,3$ is a closed curve of length no more than $2\delta$.\smallskip
		
		The last integral in \eqref{intsum} is taken over a closed contour composed of three line segments, each of length no more than $\delta$. For each $j=0,1,2$ define $b_j = \frac{1}{2}(z_j+z_{j+1})$ to be the bisector of segment $\overrightarrow{z_j,z_{j+1}}$, and for convenience define $b_{-1}=b_2$. We then define $\gamma_{j+4} = \overrightarrow{z_j,b_j} + \overrightarrow{b_j,b_{j-1}} + \overrightarrow{b_{j-1},z_j}$ and define $\gamma_7 = \overrightarrow{b_0,b_1} + \overrightarrow{b_1,b_2} + \overrightarrow{b_2,b_0}$. Then we have
		\begin{equation*}
			\oint\limits_{\overrightarrow{z_0,z_1} + \overrightarrow{z_1,z_2} + \overrightarrow{z_2,z_3}} P_ydx-P_xdy = \sum_{k=4}^7~ \oint\limits_{\gamma_k} P_ydx-P_xdy.
		\end{equation*}
		But by similar triangles
		\begin{equation*}
			\L(\gamma_k) = \frac{1}{2} \L(\overrightarrow{z_0,z_1} + \overrightarrow{z_1,z_2} + \overrightarrow{z_2,z_3}) \leq \frac{3}{2}\delta
		\end{equation*}
		for each $k=4,5,6,7$. Combining these observations with \eqref{intsum} and \eqref{alt glbl struc def}, we have
		\begin{align*}
			\sum_{R_i \in R} \int_{R_i} \Delta P(w)dm(w) & = \sum_{k=1}^7 \oint\limits_{\gamma_k} P_ydx-P_xdy\\
			& \leq \sum_{k=1}^7 \Lambda(\gamma_k(0),\L(\gamma_k)) \leq 3f(2\delta) + 4f\left(\frac{3}{2}\delta\right) \leq 7f(2\delta).
		\end{align*}
		for all $(z_0, 3\delta)$-stockyards $R$. Therefore by Theorem \ref{thm 1.1.} we see $\Lambda(z,3\delta) \leq 7f(2\delta)$ for all $z \in \mathbb{C}$, hence
		\begin{equation*}
			f(3\delta) = \sup\limits_{z \in \mathbb{C}} \Lambda(z, 3\delta) \leq 7f(2\delta).
		\end{equation*}
		In summary, for all $\delta \geq \frac{2\delta_0}{3}$ we have
		\begin{equation}
			f(2\delta) \leq f(3\delta) \leq 7f(2\delta) \label{2to3}.
		\end{equation}
		Now fix $\delta \geq \delta_0$. Because $f(\delta)$ is a nondecreasing function, we also trivially have $f(\delta) \leq f(2\delta)$. But by monotonicity and \eqref{2to3} we see
		\begin{equation*}
			f(2\delta) \leq f\left(\frac{9}{4}\delta\right) \leq 49f(\delta).
		\end{equation*}
		Therefore, $f(\delta) \approx f(2\delta)$ for all $\delta \geq \delta_0$.
	\end{proof}
	\begin{remark} \label{Remark vol}
		Lemma \ref{UGS growth} was used implicitly in \cite{Peterson2014} without proof or statement. The arguments of \cite{Peterson2014} show that for any fixed $z \in \mathbb{C}$
		\begin{equation*}
			\left\{ (w,s) \in \mathbb{C} \times \mathbb{R}\ :\ |w-z| < \frac{\delta}{4}, |s-t-T(z,w)|<\Lambda\left(z,\frac{\delta}{4}\right) \right\} \subseteq B_d\Big((z,t),\delta\Big)
		\end{equation*}
		and
		\begin{equation*}
			B_d\Big((z,t),\delta\Big) \subseteq \bigg\{ (w,s) \in \mathbb{C} \times \mathbb{R}\ :\ |w-z| < 3\delta, |s-t-T(z,w)|<\Lambda(z,3\delta) \bigg\},
		\end{equation*}
		where $T(z,w)=-2\Im\left(\int_0^1 (w-z)P_z(r(w-z)+z)dr\right)$ is the `twist' of the CC ball. Lemma \ref{UGS growth} then yields the formula $Vol(B_d((z,t),\delta)) \approx \delta^2 \Lambda(z,\delta)$ for $\delta \geq \delta_0$ when $\b\Omega$ has UGS $(f(\delta),\delta_0)$. This shows that we can think of $B_d((z,t),\delta)$ as a `twisted' ellipsoid in the case of large $\delta$, not just small $\delta$ as in \eqref{vol}.
	\end{remark}
	We are now ready to make precise the intuition laid out in the beginning of this section.
	\begin{proposition}
		If $\b\Omega$ has UGS $(f(\delta),\delta_0)$, then
		\[ \Lambda(z,\delta) \approx \sup\limits_{\hat{z} \in B(z,\delta)} \sup\limits_{0 < \hat{\delta} \leq \delta} \frac{\delta}{\hat{\delta}} \int_{B(\hat{z},\hat{\delta})} \Delta P(w)dm(w) \]
		uniformly for $z \in \mathbb{C}$ and $\delta \geq \delta_0$. \label{alt UGS}
	\end{proposition}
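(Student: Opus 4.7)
The plan is to establish the claimed $\approx$ by proving the two inequalities separately. Write $M(z,\delta)$ for the double supremum on the right-hand side.

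The bound $\Lambda(z,\delta)\le M(z,\delta)$ (in fact with constant $1$) requires only Theorem \ref{thm 1.1.} and Remark \ref{Remark penball} and makes no use of UGS. Starting from an arbitrary $(z,\delta)$-stockyard $R=(R_1,\dotsc,R_k)$, for each $i$ I pick $\hat z_i\in\b R_i$ and set $\hat\delta_i=\L(\b R_i)$. Connectivity of $\bigcup_j\b R_j$ combined with $\sum_j\hat\delta_j\le\delta$ puts each $\hat z_i$ at Euclidean distance at most $\delta$ from $z$, and Remark \ref{Remark penball} gives $R_i\subseteq B(\hat z_i,\hat\delta_i)$. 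Therefore
\[\int_{R_i}\Delta P\,dm\le\int_{B(\hat z_i,\hat\delta_i)}\Delta P\,dm\le\frac{\hat\delta_i}{\delta}M(z,\delta),\]
and summing over $i$ with $\sum_i\hat\delta_i\le\delta$ followed by Theorem \ref{thm 1.1.} gives the inequality.

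For the reverse inequality, fix $\hat z\in B(z,\delta)$ and $\hat\delta\in(0,\delta]$ and aim to build a $(z,\delta)$-stockyard of mass $\gtrsim\frac{\delta}{\hat\delta}\int_{B(\hat z,\hat\delta)}\Delta P\,dm$. The disk $B(\hat z,\hat\delta)$ is itself a pen of perimeter $2\pi\hat\delta$, and picking any $\tilde z$ on its boundary gives $\Lambda(\tilde z,2\pi\hat\delta)\ge\int_{B(\hat z,\hat\delta)}\Delta P\,dm$ by Theorem \ref{thm 1.1.}. When $2\pi\hat\delta\ge\delta_0$, the UGS hypothesis applied at scale $2\pi\hat\delta$ upgrades this to $\Lambda(z,2\pi\hat\delta)\gtrsim\int_{B(\hat z,\hat\delta)}\Delta P\,dm$, providing a $(z,2\pi\hat\delta)$-stockyard $S$ of comparable mass. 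I then exploit the fact that a stockyard is a tuple of pens with repetitions allowed by concatenating $S$ with itself $N=\lfloor\delta/(2\pi\hat\delta)\rfloor$ times. The resulting $(S,\dotsc,S)$ is a valid $(z,\delta)$-stockyard, since its union of pen boundaries coincides with that of $S$ (so connectivity and the containment $z\in\bigcup\b R_i$ are inherited), its total fencing is at most $\delta$, and its mass is $N$ times that of $S$; after absorbing the factor $2\pi$ via Lemma \ref{UGS growth}, Theorem \ref{thm 1.1.} delivers the lower bound. The sub-threshold regime $\hat\delta<\delta_0/(2\pi)$ I would handle via the curve-integral form \eqref{alt glbl struc def}: wrapping a closed FPWS curve $N$ times around $\b B(\hat z,\hat\delta)$ yields a line integral of $N\int_{B(\hat z,\hat\delta)}\Delta P\,dm$ by Green's theorem with multiplicity, and one can take $N\sim\delta/\hat\delta$ provided the ``leash'' cost $|z-\hat z|$ is a controlled fraction of $\delta$.

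The main obstacle is the case when $\hat z$ lies near the boundary of $B(z,\delta)$: a leash from $z$ to a pen around $B(\hat z,\hat\delta)$ alone consumes most of the allotted $\delta$ fencing, leaving no room for wraps, so the raw wrap-around construction cannot by itself supply the full $\frac{\delta}{\hat\delta}$ amplification. UGS is exactly the tool that sidesteps this obstruction, letting us replace a distant pen by a comparable-mass pen already based at $z$, which can then be freely duplicated. The chief bookkeeping tasks are verifying that the duplication step preserves the defining stockyard properties and carefully tracking constants in the passage between the nearby scales $\hat\delta$, $2\pi\hat\delta$, and $\delta_0$ via Lemma \ref{UGS growth}.
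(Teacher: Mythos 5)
Your upper bound is exactly the paper's argument: decompose an arbitrary $(z,\delta)$-stockyard pen by pen, dominate each pen by a disc of radius $\L(\b R_i)$ via Remark \ref{Remark penball}, insert the factor $\frac{\L(\b R_i)}{\delta}\cdot\frac{\delta}{\L(\b R_i)}$, and sum using $\sum_i \L(\b R_i)\le\delta$. For the lower bound the paper does something simpler and more uniform than your two-case scheme: for \emph{every} $\hat z\in B(z,\delta)$ and $0<\hat\delta\le\delta$ it builds a single $(z,4\pi\delta)$-stockyard consisting of one circular pen of perimeter at most $2\pi\delta$ touching both $z$ and $\b B(\hat z,\hat\delta)$ (your ``leash,'' realized as a pen rather than a curve) together with $\lfloor\delta/\hat\delta\rfloor\ge\delta/(2\hat\delta)$ copies of $B(\hat z,\hat\delta)$; since the total fencing is only a constant multiple of $\delta$, Lemma \ref{UGS growth} gives $\Lambda(z,\delta)\approx f(\delta)\approx f(4\pi\delta)\gtrsim\Lambda(z,4\pi\delta)\ge\frac{\delta}{2\hat\delta}\int_{B(\hat z,\hat\delta)}\Delta P\,dm$. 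This shows that the ``main obstacle'' you identify is not one: the leash never costs more than about $2\pi\delta$ of fencing no matter where $\hat z$ sits in $B(z,\delta)$, and constant-factor overruns in the fencing budget are always absorbed by Lemma \ref{UGS growth}, so no case split on whether $2\pi\hat\delta\ge\delta_0$ is needed and the UGS is never used to ``relocate'' mass to $z$. Two points in your version do need patching. First, when $\hat\delta>\delta/(2\pi)$ your multiplicity $N=\lfloor\delta/(2\pi\hat\delta)\rfloor$ is $0$ and the duplication argument gives nothing; here $\delta/\hat\delta\le2\pi$, so a single copy plus the doubling lemma suffices (or use the paper's count $N=\lfloor\delta/\hat\delta\rfloor\ge1$). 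Second, in your sub-threshold branch the UGS is unavailable at scale $2\pi\hat\delta$, so the ``replace a distant pen by one based at $z$'' device you invoke to handle distant $\hat z$ is not actually at your disposal there; that branch must instead be closed by the budget-inflation argument just described (total curve length at most $(2+2\pi)\delta$, then Lemma \ref{UGS growth}), not by assuming $|z-\hat z|$ is a small fraction of $\delta$.
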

	\begin{proof}
		As in the proof of Lemma \ref{UGS growth} we assume without loss of generality that $f(\delta)$ is a nondecreasing function. For any choice of $\hat{z} \in B(z,\delta)$ and $0 < \hat{\delta} \leq \delta$, define a $(z,4\pi\delta)$-stockyard $R=(R_0, R_1, \dotsc, R_N)$ composed of one pen $R_0$ which is a circle touching $z$ and some point on $\b B(\hat{z},\hat{\delta})$ and $N = \left\lfloor \frac{\delta}{\hat{\delta}} \right\rfloor$ copies of $B(\hat{z},\hat{\delta})$. Using the fact that $\left\lfloor \frac{\delta}{\hat{\delta}} \right\rfloor \geq \frac{\delta}{2\hat{\delta}}$ because $\delta \geq \hat{\delta} > 0$, we have
		\begin{align*}
			\Lambda(z,\delta) \approx f(\delta) \approx f\left(16\delta\right) \geq f(4\pi\delta) & \gtrsim \Lambda(z,4\pi\delta)\\
			& \geq \sum_{R_i \in R} \int_{R_i} \Delta P(w)dm(w)\\
			& \geq \left\lfloor \frac{\delta}{\hat{\delta}} \right\rfloor \int_{B(\hat{z},\hat{\delta})} \Delta P(w)dm(w)\\
			& \geq \frac{\delta}{2\hat{\delta}} \int_{B(\hat{z},\hat{\delta})} \Delta P(w)dm(w).
		\end{align*}
		Therefore
		\begin{equation*}
		\Lambda(z,\delta) \gtrsim \sup\limits_{\hat{z} \in B(z,\delta)} \sup\limits_{0 < \hat{\delta} \leq \delta} \frac{\delta}{\hat{\delta}} \int_{B(\hat{z},\hat{\delta})} \Delta P(w)dm(w).
		\end{equation*}
		Now let $R=(R_1, \dotsc, R_M)$ be an arbitrary $(z,\delta)$-stockyard. For $i=1,\dotsc,M$ fix some point $z_i \in R_i$. Then recalling Remark \ref{Remark penball} we have
		\begin{align*}
			\sum_{R_i \in R} \int_{R_i} \Delta P(w)dm(w) & \leq \sum_{i=1}^M \int_{B(z_i, \L(\b R_i))} \Delta P(w)dm(w)\\
			& = \sum_{i=1}^M \frac{\L(\b R_i)}{\delta} \frac{\delta}{\L(\b R_i)}\int_{B(z_i, \L(\b R_i))} \Delta P(w)dm(w)\\
			& \leq \sum_{i=1}^M \left( \frac{\L(\b R_i)}{\delta} \right) \sup\limits_{\hat{z} \in B(z,\delta)} \sup\limits_{0 < \hat{\delta} \leq \delta} \frac{\delta}{\hat{\delta}} \int_{B(\hat{z},\hat{\delta})} \Delta P(w)dm(w)\\
			& \leq \frac{\delta}{\delta} \sup\limits_{\hat{z} \in B(z,\delta)} \sup\limits_{0 < \hat{\delta} \leq \delta} \frac{\delta}{\hat{\delta}} \int_{B(\hat{z},\hat{\delta})} \Delta P(w)dm(w)\\
			& = \sup\limits_{\hat{z} \in B(z,\delta)} \sup\limits_{0 < \hat{\delta} \leq \delta} \frac{\delta}{\hat{\delta}} \int_{B(\hat{z},\hat{\delta})} \Delta P(w)dm(w).
		\end{align*}
		Therefore 
		\begin{equation*}
			\Lambda(z,\delta) \leq \sup\limits_{\hat{z} \in B(z,\delta)} \sup\limits_{0 < \hat{\delta} \leq \delta} \frac{\delta}{\hat{\delta}} \int_{B(\hat{z},\hat{\delta})} \Delta P(w)dm(w),
		\end{equation*}
		completing the proof.
	\end{proof}
	
\section{Proof of Theorem \ref{dichotomy}} \label{S dichot}
	Proposition \ref{alt UGS} reveals very strong information about the density in the space around a point when there is a UGS. Armed with this knowledge, we are almost ready to prove Theorem \ref{dichotomy}. First we recall and prove two lemmas, the first of which is a technical result from \cite{Peterson2014}.
		\begin{lemma}[\cite{Peterson2014} Lemma 4.1]
		If $\b\Omega$ has a UGS, then there are constants $C_1, C_2>0$, depending only on $\Delta P$ and $\delta_0$, such that
		\begin{itemize}
			\item[(a)] ${\displaystyle
				\inf\limits_{z \in \mathbb{C}} \sup\limits_{\hat{z} \in B(z, \delta)} \sup\limits_{0 < \hat{\delta} \leq \delta} (\hat{\delta}+\hat{\delta}^2)^{-1} \int_{B(\hat{z}, \hat{\delta})} \Delta P(w)dm(w) \geq C_1, \text{ for all }\delta \geq \delta_0
			}$;
			\item[(b)] ${\displaystyle
				\sup\limits_{z \in \mathbb{C}} \ \sup\limits_{\delta>0}\  (\delta+\delta^2)^{-1} \int_{B(z,\delta)} \Delta P(w)dm(w) \leq C_2
			}$.
		\end{itemize} \label{lemma 4.1.}
	\end{lemma}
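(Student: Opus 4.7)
The plan is to derive both inequalities from Proposition \ref{alt UGS} combined with the polynomial bounds on $f$ supplied by Theorem \ref{thm 1.2.}, splitting into the regimes $\delta \geq \delta_0$ and $0 < \delta \leq \delta_0$. The main subtlety is that Proposition \ref{alt UGS} is only stated for $\delta \geq \delta_0$, so we cannot apply it directly at small scales. The workaround is to use the proposition at the fixed anchor scale $\delta_0$ and exploit the fact that the inner supremum there already ranges over all $\hat{\delta} \leq \delta_0$, including arbitrarily small ones.

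For part (b), fix $z \in \mathbb{C}$. When $\delta \geq \delta_0$, plug $\hat{z}=z$ and $\hat{\delta}=\delta$ into the supremum in Proposition \ref{alt UGS} to obtain
\[
\int_{B(z,\delta)} \Delta P(w)\, dm(w) \lesssim \Lambda(z,\delta) \lesssim f(\delta) \lesssim \delta^2,
\]
so $(\delta+\delta^2)^{-1}\int_{B(z,\delta)}\Delta P\,dm$ is bounded by a universal constant. When $0<\delta<\delta_0$, apply Proposition \ref{alt UGS} at scale $\delta_0$ with $\hat{z}=z$ and $\hat{\delta}=\delta$: the inequality $\tfrac{\delta_0}{\delta}\int_{B(z,\delta)}\Delta P \lesssim \Lambda(z,\delta_0) \lesssim \delta_0^2$ rearranges to $\int_{B(z,\delta)}\Delta P \lesssim \delta_0\,\delta$, and dividing by $\delta+\delta^2 \geq \delta$ yields a bound depending only on $\delta_0$.

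For part (a), we work exclusively at the scale $\delta=\delta_0$ and lift by monotonicity. Proposition \ref{alt UGS} combined with the lower bound $f(\delta_0) \gtrsim \delta_0$ from Theorem \ref{thm 1.2.} gives
\[
\sup_{\hat{z}\in B(z,\delta_0)}\ \sup_{0<\hat{\delta}\leq \delta_0}\ \frac{1}{\hat{\delta}}\int_{B(\hat{z},\hat{\delta})} \Delta P(w)\,dm(w) \gtrsim 1
\]
uniformly in $z \in \mathbb{C}$. The constraint $\hat{\delta} \leq \delta_0$ implies $\hat{\delta}+\hat{\delta}^2 \leq (1+\delta_0)\hat{\delta}$, so the pointwise inequality $(\hat{\delta}+\hat{\delta}^2)^{-1} \geq (1+\delta_0)^{-1}\hat{\delta}^{-1}$ survives both suprema, yielding (a) at $\delta=\delta_0$. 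For $\delta > \delta_0$ the domains of both suprema only enlarge, so the double supremum is non-decreasing in $\delta$ and the lower bound propagates. The delicate point is the switch between the Proposition \ref{alt UGS}-natural normalization $\hat{\delta}^{-1}$ and the $(\hat{\delta}+\hat{\delta}^2)^{-1}$ normalization in the lemma; these are comparable exactly when $\hat{\delta}$ stays bounded by a fixed constant, which is precisely what anchoring at $\delta_0$ guarantees.
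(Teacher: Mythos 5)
Your argument is logically sound as a derivation from the results stated elsewhere in this paper, but note that the paper does not prove this lemma at all: it is quoted verbatim from \cite{Peterson2014}, so there is no internal proof to compare against. Your two estimates check out. For (b), taking $\hat z = z$, $\hat\delta = \delta$ in Proposition \ref{alt UGS} at scale $\max(\delta,\delta_0)$ does give $\int_{B(z,\delta)}\Delta P\,dm \lesssim \delta^2$ for $\delta \geq \delta_0$ and $\lesssim \delta_0\delta$ for $\delta < \delta_0$, which yields the $(\delta+\delta^2)^{-1}$ bound with a constant depending only on $\delta_0$ and the UGS comparability constants. For (a), anchoring at $\delta_0$, using $\Lambda(z,\delta_0) \approx f(\delta_0) \gtrsim \delta_0$ from Theorem \ref{thm 1.2.}, and observing that $\hat\delta + \hat\delta^2 \leq (1+\delta_0)\hat\delta$ on the range of the inner supremum is correct, as is the monotonicity-in-$\delta$ argument (both supremum domains enlarge, so the bound propagates to all $\delta \geq \delta_0$, and the pointwise comparison of normalizations does survive the suprema since the integrand is nonnegative). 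The one caveat worth flagging: you lean on Theorem \ref{thm 1.2.} (the bound $c\delta \leq f(\delta) \leq C\delta^2$), which is itself imported from the same source paper, and in \cite{Peterson2014} the upper bound $f(\delta) \lesssim \delta^2$ is plausibly \emph{derived from} Lemma 4.1(b) rather than the other way around. Within the logical architecture of the present paper your derivation creates no cycle (Proposition \ref{alt UGS} is proved here using only Lemma \ref{UGS growth} and Theorem \ref{thm 1.1.}), but as a self-contained proof of the lemma it would be circular if Theorem \ref{thm 1.2.} secretly depends on it. A fully independent proof would establish (b) directly from the stockyard characterization (a single circular pen of circumference $2\pi\delta$, or $\lfloor \delta_0/(2\pi\delta)\rfloor$ copies of it when $\delta$ is small) together with an a priori quadratic bound, rather than routing through $f(\delta) \lesssim \delta^2$.
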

	\begin{remark} \label{lemma constants}
		Note that increasing $\delta_0$ can only possibly increase $C_1$ and will not affect the constant $C_2$.
	\end{remark}
	We also need a short geometric lemma.
	\begin{lemma}
		Let $0 < a \leq b$. Then within any disc of radius $b$ in $\mathbb{C}$, one can pack at least $\frac{b^2}{16a^2}$ disjoint discs of radius $a$. \label{ball packing}
	\end{lemma}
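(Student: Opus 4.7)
The plan is a direct grid-packing argument. The first step is to inscribe in the ball $B(z_0,b)$ an axis-aligned closed square $S$ of side length $b\sqrt{2}$, which is possible because the diagonal of such a square equals the diameter $2b$. Any packing carried out inside $S$ yields a packing inside $B(z_0,b)$, so the problem is reduced from a disc to a square.

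Next, set $N=\lfloor b\sqrt{2}/(2a)\rfloor$. When $N\geq 1$, I would tile one corner of $S$ with an $N\times N$ array of closed sub-squares of side $2a$; these have pairwise disjoint interiors, and each one contains its own inscribed disc of radius $a$. These $N^2$ incircles are therefore pairwise disjoint discs of radius $a$ contained in $S\subseteq B(z_0,b)$, so it suffices to verify $N^2\geq b^2/(16a^2)$.

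For this I would use the elementary estimate $\lfloor x\rfloor\geq x/2$ for every real $x\geq 1$, which yields $N\geq b\sqrt{2}/(4a)$ and hence $N^2\geq b^2/(8a^2)\geq b^2/(16a^2)$; the constant $16$ leaves a full factor of $2$ of slack to absorb the rounding. The only case this argument does not cover is $b\sqrt{2}/(2a)<1$, i.e.\ $a>b/\sqrt{2}$. In that regime, $b^2/(16a^2)<1/8<1$, so it is enough to exhibit a single disc of radius $a$ contained in $B(z_0,b)$, and this is immediate since $a\leq b$ by hypothesis.

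There is essentially no substantive obstacle to this proof: the only subtlety is handling the floor function cleanly, which the case split and the bound $\lfloor x\rfloor\geq x/2$ on $[1,\infty)$ settle at once. The constant $16$ in the statement is consistent with (and indeed looser than) what this naive hypercube packing inside the inscribed square produces.
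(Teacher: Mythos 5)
Your proof is correct and follows essentially the same route as the paper's: inscribe a square of side $\sqrt{2}b$ in the disc, tile it with $\lfloor \sqrt{2}b/(2a)\rfloor^2$ sub-squares of side $2a$, take their incircles, and handle the case $2a>\sqrt{2}b$ separately with a single disc. The only cosmetic difference is the floor-function bookkeeping (you use $\lfloor x\rfloor\geq x/2$ for $x\geq 1$, the paper uses $\lfloor x^2\rfloor<4\lfloor x\rfloor^2$), and both give the constant $16$ with room to spare.
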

	\begin{proof}
		Without loss of generality, assume the disc of radius $b$ is centered at the origin. Since $B(0,a) \subset B(0,b)$, we can always pack at least one disc of radius $a$ inside of $B(0,b)$. If $2a > \sqrt{2}b$ then we have at least 1 disc of radius $a$ inside of $B(0,b)$, and
		\begin{equation*}
			1 > \frac{\sqrt{2}b}{2a} > \frac{b^2}{2a^2} > \frac{b^2}{16a^2}.
		\end{equation*}
		Note now that for all $x \geq 1$ we have $x = \lfloor x \rfloor + \alpha$ for some $\alpha \in [0,1)$ so that
		\begin{equation*}
			\lfloor x^2 \rfloor = \lfloor (\lfloor x \rfloor + \alpha)^2 \rfloor < \lfloor (\lfloor x \rfloor + \lfloor x \rfloor)^2 \rfloor = \lfloor 4\lfloor x \rfloor^2 \rfloor = 4 \lfloor x \rfloor^2.
		\end{equation*}
		Assume then that $2a \leq \sqrt{2}b$. The disc $B(0,b)$ contains a square of side length $\left\lfloor \frac{\sqrt{2}b}{2a} \right\rfloor 2a \leq \sqrt{2}b$ centered at the origin. This square contains exactly $\left\lfloor \frac{\sqrt{2}b}{2a} \right\rfloor^2$ disjoint squares of side length $2a$, each of which contains a disc of radius $a$. So we again see that $B(0,b)$ contains at least
		\begin{equation*}
			\left\lfloor \frac{\sqrt{2}b}{2a} \right\rfloor^2 > \frac{1}{4} \left\lfloor \frac{b^2}{2a^2} \right\rfloor \geq \frac{b^2}{16a^2}
		\end{equation*}
		discs of radius $a$.
	\end{proof}
	We are now ready to prove Theorem \ref{dichotomy}.
	\begin{proof}[Proof of Theorem \ref{dichotomy}]
		Proposition \ref{alt UGS} shows that there is some constant $c>0$ such that
		\begin{equation*}
			\sup_{\hat{z} \in B(z,\delta)} \sup_{0<\hat{\delta}\leq\delta} \frac{\delta}{\hat{\delta}} \int_{B(\hat{z},\hat{\delta})} \Delta P(w)dm(w) \geq cf(\delta)
		\end{equation*}
		for all $z \in \mathbb{C}$ and $\delta \geq \delta_0$. So for all $z \in \mathbb{C}$ and $\delta \geq \delta_0$, there exists $\hat{z} \in B(z,\delta)$ and $0<\hat{\delta}\leq\delta$ such that
		\begin{equation*}
			\frac{1}{\hat{\delta}} \int_{B(\hat{z},\hat{\delta})} \Delta P(w)dm(w) \geq \frac{1}{2}c\frac{f(\delta)}{\delta}.
		\end{equation*}
		Now suppose $f(\delta)=\delta$ is not a UGS for $\b\Omega$. That is, $\limsup\limits_{\delta \to +\infty} \frac{f(\delta)}{\delta} = +\infty$. Then, taking $C_2>0$ as in Lemma \ref{lemma 4.1.}, we can choose $\delta_1 > \max(1,\delta_0)$ such that $\frac{f(\delta_1)}{\delta_1} > \frac{4C_2}{c}$. Choose $\hat{\delta}$ associated to $\delta = \delta_1$ as above. If $\hat{\delta} \leq 1$, then by Lemma \ref{lemma 4.1.} we have
		\begin{equation*}
			2C_2 < \frac{c}{2} \frac{f(\delta_1)}{\delta_1} \leq \frac{1}{\hat{\delta}} \int_{B(\hat{z},\hat{\delta})} \Delta P(w)dm(w) \leq \frac{2}{\hat{\delta}+\hat{\delta}^2} \int_{B(\hat{z},\hat{\delta})} \Delta P(w)dm(w) \leq 2C_2,
		\end{equation*}
		which is impossible. Therefore for all $z \in \mathbb{C}$, there exists $\hat{z} \in B(z,\delta_1)$ and $1 \leq \hat{\delta} \leq \delta_1$ such that
		\begin{equation*}
			\int_{B(\hat{z},\hat{\delta})} \Delta P(w)dm(w) \geq \frac{c}{2}\frac{f(\delta_1)}{\delta_1} \hat{\delta} \geq 2C_2 > 0.
		\end{equation*}
		It follows that for all $z \in \mathbb{C}$
		\begin{equation*}
			\int_{B(z,2\delta_1)} \Delta P(w)dm(w) \geq \int_{B(\hat{z},\hat{\delta})} \Delta P(w)dm(w) \geq 2C_2.
		\end{equation*}
		By Lemma \ref{ball packing}, for all $\delta \geq \delta_1$, we can pack $N > \frac{\delta^2}{16\delta_1^2}$ disjoint discs of radius $2\delta_1$ within a disc of radius $2\delta$. So for all $z \in \mathbb{C}$
		\begin{equation*}
			\int_{B(z,2\delta)} \Delta P(w)dm(w) \geq N \int_{B(z,2\delta_1)} \Delta P(w)dm(w) > \frac{\delta^2}{16\delta_1^2} \cdot 2C_2 \approx (2\delta)^2.
		\end{equation*}
		Then for all $\delta \geq 2\delta_1$ and some $z_1 \in \b B(z,\delta)$
		\begin{equation*}
			f(\delta) \approx f(2\pi\delta) \approx \Lambda(z_1,2\pi\delta) \geq \int_{B(z,\delta)} \Delta P(w)dm(w) \gtrsim \delta^2.
		\end{equation*}
		But Theorem \ref{thm 1.2.} implies $f(\delta) \lesssim \delta^2$ for all $\delta \geq 2\delta_1 \geq \delta_0$. Therefore setting $\delta^*=2\delta_1$ we see that if $f(\delta)=\delta$ is not a UGS for $\b\Omega$, then $(\delta^2, \delta^*)$ is a UGS for $\b\Omega$.
	\end{proof}
	So a UGS must grow in a linear or quadratic fashion. Linear growth means that for any point, the stockyards which pick up the most mass enclose a dense, nearby disc as many times as possible. Quadratic growth means a stockyard which picks up the most mass does so by taking a pen consisting of one large disc, collecting as much area as possible.
	
\section{Identifying uniform global structures} \label{S identify}
	So far, almost all of the results of this paper have taken as hypothesis that $\b\Omega$ has a UGS and considered what that means for the global structure $\Lambda$. To look at an arbitrary model domain and determine if there is a UGS is a much more difficult question. But with Theorem \ref{dichotomy}, we see that we only need to provide conditions to identify uniform global structures where either $f(\delta)=\delta$ or $f(\delta)=\delta^2$. The following two theorems provide necessary and sufficient conditions for each case.
	\begin{theorem}
		$(\delta,\delta_0)$ is a UGS for $\b\Omega$ if and only if
		\begin{itemize}
			\item[(a)] $\displaystyle{\int_{B(z,\delta)} \Delta P(w)dm(w) \lesssim \delta}$ for all $z \in \mathbb{C}$ and $\delta > 0$, and 
			\item[(b)] There exist constants $\delta^*>M>0$ such that $\displaystyle{\inf_{z \in \mathbb{C}} \sup_{\hat{z} \in B(z,\delta^*)} \sup_{0<\hat{\delta}\leq M} \frac{1}{\hat{\delta}} \int_{B(\hat{z},\hat{\delta})} \Delta P(w)dm(w) \gtrsim 1}$.
		\end{itemize} \label{linear}
	\end{theorem}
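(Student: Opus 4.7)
The plan is to prove the two implications using Proposition \ref{alt UGS} for the forward direction and a direct stockyard construction (via Theorem \ref{thm 1.1.}) for the reverse.

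\textbf{Forward direction: UGS implies (a), (b).} Assuming $(\delta,\delta_0)$ is a UGS, Proposition \ref{alt UGS} supplies constants $c,C>0$ with
\begin{equation*}
c\delta \;\leq\; \sup_{\hat z \in B(z,\delta)} \sup_{0 < \hat\delta \leq \delta} \frac{\delta}{\hat\delta} \int_{B(\hat z,\hat\delta)} \Delta P(w)\,dm(w) \;\leq\; C\delta
\end{equation*}
for all $z \in \mathbb{C}$ and all $\delta \geq \delta_0$. To extract (a), I would fix any $z^* \in \mathbb{C}$ and any $\hat\delta > 0$, set $\delta = \max(\delta_0,\hat\delta)$, and evaluate the upper bound at the legal choice $z = \hat z = z^*$ to conclude $\int_{B(z^*,\hat\delta)}\Delta P \leq C\hat\delta$. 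To extract (b), I would specialize the lower bound to $\delta = \delta_0$, which gives $\sup_{\hat z \in B(z,\delta_0)} \sup_{0<\hat\delta\leq\delta_0} \hat\delta^{-1}\int_{B(\hat z,\hat\delta)}\Delta P \geq c$ uniformly in $z$, and then take $M = \delta_0$ and $\delta^* = 2\delta_0 > M$ (enlarging the outer ball only helps).

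\textbf{Reverse direction: (a), (b) imply UGS.} The upper bound $\Lambda(z,\delta) \lesssim \delta$ is immediate without invoking Proposition \ref{alt UGS}: for any $(z,\delta)$-stockyard $R = (R_1,\ldots,R_N)$, Remark \ref{Remark penball} places each $R_i$ inside a disc of radius $\L(\b R_i)$, so (a) yields
\begin{equation*}
\sum_{R_i \in R}\int_{R_i}\Delta P \;\lesssim\; \sum_i \L(\b R_i) \;\leq\; \delta,
\end{equation*}
and Theorem \ref{thm 1.1.} gives $\Lambda(z,\delta) \lesssim \delta$ for all $z$ and all $\delta > 0$. For the matching lower bound when $\delta$ is large, I would use (b) to pick, for each $z \in \mathbb{C}$, a point $\hat z \in B(z,\delta^*)$ and a radius $0 < \hat\delta \leq M$ with $\int_{B(\hat z,\hat\delta)}\Delta P \geq c\hat\delta$, and then build a stockyard from a single connecting pen $R_0$ --- the open disc whose diameter is the segment from $z$ to a chosen point $w \in \b B(\hat z,\hat\delta)$, giving $\L(\b R_0) \leq \pi(\delta^* + M)$ --- together with $N = \lfloor (\delta - \L(\b R_0))/(2\pi\hat\delta)\rfloor$ coincident copies of $B(\hat z,\hat\delta)$. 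The boundaries meet at $w$, so the boundary union is connected, and the total fencing is at most $\delta$. For $\delta$ at least a sufficiently large multiple of $\delta^* + M$, one has $N \gtrsim \delta/\hat\delta$, so Theorem \ref{thm 1.1.} gives $\Lambda(z,\delta) \geq N\int_{B(\hat z,\hat\delta)}\Delta P \gtrsim (\delta/\hat\delta)\cdot c\hat\delta \approx \delta$.

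\textbf{Main obstacle.} The delicate point is producing the connecting pen $R_0$ with all four simultaneous properties: open and simply connected with piecewise smooth boundary, containing $z$ on its boundary, meeting $\b B(\hat z,\hat\delta)$ (so that the union of boundaries in the stockyard is connected), and having length at most $O(\delta^* + M)$ (a constant independent of $\delta$). The diameter-disc construction above handles all four at once; once it is in place, the rest is routine bookkeeping on $N$ to ensure $N \gtrsim \delta/\hat\delta$.
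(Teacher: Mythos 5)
Your proof is correct and follows essentially the same route as the paper's: the reverse direction (pens inside discs via Remark \ref{Remark penball} for the upper bound; one connecting pen of boundary length $O(\delta^*+M)$ plus $\lfloor\delta/(2\pi\hat{\delta})\rfloor$ coincident copies of $B(\hat{z},\hat{\delta})$ for the lower bound) is the paper's argument almost verbatim. The only difference is that in the forward direction you extract both (a) and (b) from Proposition \ref{alt UGS}, whereas the paper gets (a) by a direct stockyard construction and (b) from Lemma \ref{lemma 4.1.}(a); since Proposition \ref{alt UGS} encapsulates exactly those constructions, this is a legitimate streamlining rather than a genuinely different method.
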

	\begin{proof}
		Suppose $(\delta,\delta_0)$ is a UGS for $\b\Omega$. For any $z \in \mathbb{C}$, fix some point $z_1$ with $|z_1-z|=\delta$. If $2\pi\delta \geq \delta_0$ then
			\[ \int_{B(z,\delta)} \Delta P(w)dm(w) \leq \Lambda(z_1,2\pi\delta) \approx 2\pi\delta \approx \delta. \]
		If $0 < 2\pi\delta < \delta_0$, then taking a stockyard consisting of $\left\lfloor \frac{\delta_0}{2\pi\delta} \right\rfloor$ copies of $B(z,\delta)$ gives
		\begin{equation*}
			\frac{\delta_0}{4\pi\delta} \int_{B(z,\delta)} \Delta P(w)dm(w) \leq \left\lfloor \frac{\delta_0}{2\pi\delta} \right\rfloor \int_{B(z,\delta)} \Delta P(w)dm(w) \leq \Lambda(z_1,\delta_0) \approx 1.
		\end{equation*}
		Therefore (a) holds.\smallskip
		
		Also, for any fixed $\delta^* \geq \delta_0 > 0$, Lemma \ref{lemma 4.1.} gives some constant $C_1>0$ such that
		\begin{align*}
			\inf_{z \in \mathbb{C}} \sup_{\hat{z} \in B(z,\delta^*)} \sup_{0<\hat{\delta}\leq \delta_0}  \frac{1}{\hat{\delta}} \int_{B(\hat{z},\hat{\delta})} & \Delta P(w)dm(w) \\
			& \geq \inf_{z \in \mathbb{C}} \sup_{\hat{z} \in B(z,\delta_0)} \sup_{0<\hat{\delta}\leq \delta_0} \frac{1}{\hat{\delta}} \int_{B(\hat{z},\hat{\delta})} \Delta P(w)dm(w)\\
			& \geq \inf_{z \in \mathbb{C}} \sup_{\hat{z} \in B(z,\delta_0)} \sup_{0<\hat{\delta}\leq \delta_0} \frac{1}{\hat{\delta}+\hat{\delta}^2} \int_{B(\hat{z},\hat{\delta})} \Delta P(w)dm(w) \geq C_1.
		\end{align*}
		Therefore (b) holds (with $M=\delta_0$).\smallskip
		
		Now we suppose (a) and (b) hold. For any $\delta>0$ and $z \in \mathbb{C}$, let $R=(R_1,\dotsc,R_N)$ be an arbitrary $(z,\delta)$-stockyard. For each $i=1,\dotsc,N$, fix some point $z_i \in R_i$. Then recalling Remark \ref{Remark penball}, (a) gives
		\begin{equation*}
			\sum_{R_i \in R} \int_{R_i} \Delta P(w)dm(w) \leq \sum_{R_i \in R} \int_{B(z_i,\L(\b R_i))} \Delta P(w)dm(w) \lesssim \sum_{R_i \in R} \L(\b R_i) \leq \delta.
		\end{equation*}
		Therefore $\Lambda(z,\delta) \lesssim \delta$ uniformly for $z \in \mathbb{C}$ and $\delta > 0$.\smallskip
		
		For any $z \in \mathbb{C}$, fix a $\hat{z} \in B(z,\delta^*)$ and $0 < \hat{\delta} \leq M$ such that $\frac{1}{\hat{\delta}} \int_{B(\hat{z},\hat{\delta})} \Delta P(w)dm(w) \gtrsim 1$ as given by (b). Then for all $\delta \geq 2\pi M \geq 2\pi\hat{\delta}$, there is a $(z,\pi\delta^*+\delta)$-stockyard $R$ which consists of one circular pen touching $z$ and some point on $\b B(\hat{z},\hat{\delta})$ and $\left\lfloor \frac{\delta}{2\pi\hat{\delta}} \right\rfloor$ copies of $B(\hat{z},\hat{\delta})$. Then
		\begin{align*}
			\Lambda(z,\pi\delta^*+\delta) \geq \sum_{R_i \in R} \int_{R_i} \Delta P(w)dm(w) & \geq \left\lfloor \frac{\delta}{2\pi\hat{\delta}} \right\rfloor \int_{B(\hat{z},\hat{\delta})} \Delta P(w)dm(w)\\
			& \geq \frac{\delta}{4\pi\hat{\delta}} \int_{B(\hat{z},\hat{\delta})} \Delta P(w)dm(w)\\
			&\gtrsim \delta = 2\pi M \frac{\delta}{2\pi M} \geq \frac{2\pi M}{2\pi M + \pi\delta^*} (\pi\delta^*+\delta),
		\end{align*}
		where here we have used the fact that if $c \geq 0$ and $a \geq b > 0$, then $\frac{a}{b} \geq \frac{a+c}{b+c}$. Therefore $\Lambda(z,\delta) \approx \delta$ for all $\delta \geq \delta_0$ with $\delta_0 = \pi\delta^*+2\pi M$.
	\end{proof}
	\begin{theorem}
		$(\delta^2,\delta_0)$ is a UGS for $\b\Omega$ if and only if there exists $\delta^*>0$ such that, uniformly for $z \in \mathbb{C}$,  
		\begin{itemize}
			\item[(a)] $\displaystyle{\int_{B(z,\delta)} \Delta P(w)dm(w) \lesssim \delta}$ when $\delta \leq \delta^*$, and
			\item[(b)] $\displaystyle{\int_{B(z,\delta)} \Delta P(w)dm(w) \approx \delta^2}$ when $\delta \geq \delta^*$.
		\end{itemize} \label{quadratic}
	\end{theorem}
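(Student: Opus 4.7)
The plan is to handle the two directions separately, with the forward direction drawing on the alternate characterization of the UGS (Proposition \ref{alt UGS}) and the upper bound Lemma \ref{lemma 4.1.}(b), and the reverse direction proceeding by direct stockyard construction and decomposition.

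\textbf{Forward direction.} Suppose $(\delta^2,\delta_0)$ is a UGS. Condition (a) is essentially free from Lemma \ref{lemma 4.1.}(b): for any fixed $\delta^* > 0$, whenever $\delta \leq \delta^*$ one has $\int_{B(z,\delta)}\Delta P\,dm \leq C_2(\delta+\delta^2) \leq C_2(1+\delta^*)\delta$. For the upper bound in (b), given $z\in\mathbb{C}$ and $\delta$ large, fix any $z_1$ with $|z_1-z|=\delta$; then the single pen $B(z,\delta)$, whose boundary circle of length $2\pi\delta$ passes through $z_1$, forms a $(z_1,2\pi\delta)$-stockyard, so by Theorem \ref{thm 1.1.} and the UGS assumption, $\int_{B(z,\delta)}\Delta P\,dm \leq \Lambda(z_1,2\pi\delta) \approx (2\pi\delta)^2 \approx \delta^2$.

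The main obstacle is the lower bound in (b). Here I invoke Proposition \ref{alt UGS}: since $\Lambda(z,\delta)\gtrsim\delta^2$, there exist $\hat{z}\in B(z,\delta)$ and $0<\hat{\delta}\leq\delta$ with $\frac{1}{\hat{\delta}}\int_{B(\hat{z},\hat{\delta})}\Delta P\,dm \gtrsim \delta$. But Lemma \ref{lemma 4.1.}(b) bounds the same quantity above by $C_2(1+\hat{\delta})$, so for sufficiently large $\delta$ (independent of $z$) we must have $\hat{\delta}\gtrsim\delta$. It then follows that
\begin{equation*}
\int_{B(\hat{z},\hat{\delta})}\Delta P\,dm \;\gtrsim\; \delta\cdot\hat{\delta} \;\gtrsim\; \delta^2,
\end{equation*}
and since $\hat{z}\in B(z,\delta)$ and $\hat{\delta}\leq\delta$ we have $B(\hat{z},\hat{\delta})\subseteq B(z,2\delta)$, giving $\int_{B(z,2\delta)}\Delta P\,dm\gtrsim\delta^2$. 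Rescaling $\delta\mapsto\delta/2$ produces the desired $\int_{B(z,\delta)}\Delta P\,dm\gtrsim\delta^2$ for $\delta$ beyond some threshold, which we absorb into $\delta^*$.

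\textbf{Reverse direction.} Assume (a) and (b). For the lower bound on $\Lambda(z,\delta)$, take the single circular pen of radius $\delta/(2\pi)$ tangent to $z$; its boundary has length exactly $\delta$, so it forms a $(z,\delta)$-stockyard, and when $\delta/(2\pi)\geq\delta^*$, (b) yields $\int_{B(\hat{z},\delta/(2\pi))}\Delta P\,dm \approx (\delta/(2\pi))^2 \approx \delta^2$. Hence $\Lambda(z,\delta)\gtrsim\delta^2$. For the upper bound, let $R=(R_1,\dotsc,R_N)$ be any $(z,\delta)$-stockyard, write $s_i=\L(\b R_i)$, and pick $z_i\in R_i$ so that $R_i\subseteq B(z_i,s_i)$ by Remark \ref{Remark penball}. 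Splitting indices by whether $s_i\leq\delta^*$ or $s_i>\delta^*$ and applying (a) and (b) respectively,
\begin{equation*}
\sum_i\int_{R_i}\Delta P\,dm \;\lesssim\; \sum_{s_i\leq\delta^*}s_i + \sum_{s_i>\delta^*}s_i^2 \;\leq\; \sum_i s_i + \Bigl(\sum_i s_i\Bigr)^{\!2} \;\leq\; \delta+\delta^2 \;\lesssim\; \delta^2
\end{equation*}
for $\delta\geq 1$, where in the middle I used $\sum a_i^2\leq(\sum a_i)^2$ for nonnegative $a_i$. Taking the supremum over stockyards and invoking Theorem \ref{thm 1.1.} yields $\Lambda(z,\delta)\lesssim\delta^2$. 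Choosing $\delta_0$ sufficiently large (depending on $\delta^*$) to validate both bounds completes the proof.
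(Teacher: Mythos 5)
Your proof is correct, and while the reverse direction and conditions (a) plus the upper bound of (b) follow the paper essentially verbatim, your argument for the lower bound in (b) takes a genuinely different and shorter route. The paper extracts $\hat{z}$, $\hat{\delta}$ from Proposition \ref{alt UGS} just as you do, but then only rules out $\hat{\delta}\leq 1$ (via a contradiction with Lemma \ref{lemma 4.1.}(b)), concluding merely that $\int_{B(z,2\delta_1)}\Delta P\,dm\geq 2C_2$ for one fixed scale $\delta_1$; it then needs the separate packing lemma (Lemma \ref{ball packing}) to tile $B(z,2\delta)$ with $\gtrsim \delta^2/\delta_1^2$ disjoint copies of $B(\cdot,2\delta_1)$ and sum the contributions to reach $\gtrsim\delta^2$. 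You instead squeeze $\frac{c}{2}\delta\leq\frac{1}{\hat{\delta}}\int_{B(\hat{z},\hat{\delta})}\Delta P\,dm\leq C_2(1+\hat{\delta})$ to conclude $\hat{\delta}\gtrsim\delta$ outright — a strictly stronger localization of the mass than the paper obtains at that stage — which lets you read off $\int_{B(\hat{z},\hat{\delta})}\Delta P\,dm\gtrsim\delta\hat{\delta}\gtrsim\delta^2$ and finish by the inclusion $B(\hat{z},\hat{\delta})\subseteq B(z,2\delta)$, with no packing argument at all. The trade-off is minor: the paper's packing route is the same mechanism it reuses in the proof of Theorem \ref{dichotomy} (where the analogous squeeze is unavailable because one only knows $f(\delta_1)/\delta_1$ is large at a single $\delta_1$, not that $f(\delta)/\delta\gtrsim\delta$ for all large $\delta$), whereas your squeeze exploits the specific quadratic hypothesis and yields a self-contained, shorter forward direction. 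All quantifiers in your version are uniform in $z$ since $c$ and $C_2$ are, so the threshold absorbed into $\delta^*$ is legitimate.
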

	\begin{proof}
		Suppose $(\delta^2,\delta_0)$ is a UGS for $\b\Omega$. Then for any $z \in \mathbb{C}$ and some point $z_1$ with $|z_1 - z|=\delta$ we have
		\begin{equation*}
			\int_{B(z,\delta)} \Delta P(w)dm(w) \leq \Lambda(z_1, 2\pi\delta) \approx (2\pi\delta)^2 \approx \delta^2
		\end{equation*}
		for all $\delta \geq \delta_0$.\smallskip
		
		Proposition \ref{alt UGS} shows that there is some constant $c>0$ such that
		\begin{equation*}
			\sup_{\hat{z} \in B(z, \delta)} \sup_{0<\hat{\delta}\leq\delta} \frac{\delta}{\hat{\delta}} \int_{B(\hat{z},\hat{\delta})} \Delta P(w)dm(w) \geq c\delta^2.
		\end{equation*}
		for all $z \in \mathbb{C}$ and $\delta \geq \delta_0$. So for all $z \in \mathbb{C}$ and $\delta \geq \delta_0$, there exists $\hat{z} \in B(z,\delta)$ and $0 < \hat{\delta} \leq \delta$ such that
		\begin{equation*}
			\frac{1}{\hat{\delta}} \int_{B(\hat{z},\hat{\delta})} \Delta P(w)dm(w) \geq \frac{1}{2}c\delta.
		\end{equation*}
		Taking $C_2>0$ as in Lemma \ref{lemma 4.1.}, choose some $\delta_1 > \max(1,\delta_0, \frac{4C_2}{c})$. Choose $\hat{\delta}$ associated to $\delta = \delta_1$ as above. If $\hat{\delta} \leq 1$, then by Lemma \ref{lemma 4.1.} we have
		\begin{equation*}
			2C_2 < \frac{c}{2}\delta_1 \leq \frac{1}{\hat{\delta}} \int_{B(\hat{z},\hat{\delta})} \Delta P(w)dm(w) \leq \frac{2}{\hat{\delta}+\hat{\delta}^2} \int_{B(\hat{z},\hat{\delta})} \Delta P(w)dm(w) \leq 2C_2,
		\end{equation*}
		which is impossible. Therefore for all $z \in \mathbb{C}$, there exists $\hat{z} \in B(z,\delta_1)$ and $1 \leq \hat{\delta} \leq \delta_1$ such that
		\begin{equation*}
			\int_{B(\hat{z},\hat{\delta})} \Delta P(w)dm(w) \geq \frac{c}{2}\delta_1\hat{\delta} \geq 2C_2 > 0.
		\end{equation*}
		It follows that for all $z \in \mathbb{C}$
		\begin{equation*}
			\int_{B(z,2\delta_1)} \Delta P(w)dm(w) \geq \int_{B(\hat{z},\hat{\delta})} \Delta P(w)dm(w) \geq 2C_2.
		\end{equation*}
		By Lemma \ref{ball packing}, for all $\delta \geq \delta_1$, we can pack $N > \frac{\delta^2}{16\delta_1^2}$ disjoint discs of radius $2\delta_1$ within a disc of radius $2\delta$. So for all $z \in \mathbb{C}$
		\begin{equation*}
			\int_{B(z,2\delta)} \Delta P(w)dm(w) \geq N \int_{B(z,2\delta_1)} \Delta P(w)dm(w) > \frac{\delta^2}{16\delta_1^2} \cdot 2C_2 \approx (2\delta)^2.
		\end{equation*}
		Therefore
		\begin{equation*}
			\int_{B(z,\delta)} \Delta P(w)dm(w) \approx \delta^2
		\end{equation*} for all $\delta \geq 2\delta_1 > \delta_0$. Setting $\delta^*=2\delta_1$ we see (b) holds. Also, Lemma \ref{lemma 4.1.} yields
		\begin{equation*}
			\int_{B(z,\delta)} \Delta P(w)dm(w) \leq C_2(\delta+\delta^2).
		\end{equation*}
		But if $\delta \leq \delta^*$ then
		\begin{equation*}
			\int_{B(z,\delta)} \Delta P(w)dm(w) \leq C_2(\delta^*+1)\delta \approx \delta.
		\end{equation*}
		So (a) holds.\smallskip
		
		Now we suppose (a) and (b) hold so that for $\delta \leq \delta^*$ we have $\int_{B(z,\delta)} \Delta P(w)dm(w) \leq a\delta$ and for $\delta \geq \delta^*$ we have  $\int_{B(z,\delta)} \Delta P(w)dm(w) \leq b\delta^2$ for some constants $a,b>0$. For $\delta \geq 1$, let $R=(R_1,\dotsc,R_N)$ be an arbitrary $(z,\delta)$-stockyard. Without loss of generality, me way relabel the pens so that $\L(\b R_i) \leq \delta^*$ for $i=1,\dotsc,L$ and $\L(\b R_i) \geq \delta^*$ for $i=L+1,\dotsc,N$ for some integer $L\in\{0,\dotsc,N\}$. For each $i=1,\dotsc,N$ fix some $z_i \in R_i$. Recalling Remark \ref{Remark penball}, we have
		\begin{align*}
			\sum_{R_i \in R} \int_{R_i} \Delta P(w)dm(w) & = \sum_{i=1}^L \int_{R_i} \Delta P(w)dm(w) + \sum_{i=L+1}^N \int_{R_i} \Delta P(w)dm(w)\\
			& \leq \sum_{i=1}^L \int_{B(z_i,\L(\b R_i))} \Delta P(w)dm(w) + \sum_{i=L+1}^N \int_{B(z_i,\L(\b R_i))} \Delta P(w)dm(w)\\
			& \leq a \sum_{i=1}^L \L(\b R_i) + b \sum_{i=L+1}^N \L(\b R_i)^2\\
			& \leq a \sum_{i=1}^L \L(\b R_i) + b \left( \sum_{i=L+1}^N \L(\b R_i) \right)^2
		\leq a \delta + b \delta^2 \lesssim \delta^2
		\end{align*}
		So $\Lambda(z,\delta) \lesssim \delta^2$ for all $\delta \geq 1$.\smallskip
		
		Using (b), we may take a stockyard consisting of one large circular pen with radius $\delta \geq \delta^*$ and center $z_1$ satisfying $|z_1-z|=\delta$ to see that
		\begin{equation*}
			\Lambda(z,2\pi\delta) \geq \int_{B(z_1,\delta)} \Delta P(w)dm(w) \approx \delta^2 \approx (2\pi\delta)^2.
		\end{equation*}
		Therefore $\Lambda(z,\delta) \approx \delta^2$ for all $\delta \geq \delta_0$ with $\delta_0 = \max\left(1,\frac{\delta^*}{2\pi}\right)$.
	\end{proof}
	
\section{Future directions} \label{S future}
	Although the results of this paper completely describe the nature of uniform global structures for the model domains we consider, several interesting avenues for further study present themselves when we weaken our hypotheses. One such direction would be to extend the results of this paper to higher dimensions. That is, is there an appropriate notion of stockyards in higher dimensions with which to analyze the global structure on the boundary of a model domain in $\mathbb{C}^n$? It is not clear how the Green's Theorem argument used in \cite{Peterson2014} to prove Theorem \ref{thm 1.1.} would generalize or even how (if at all) the notion of stockyards should generalize to higher dimensions.\smallskip
	
	One could also relax the conditions on $P$ which determine the boundary $\b\Omega$. For example, do similar results hold assuming that $P$ is only once differentiable and that $\Delta P$ as a distribution is non-negative? One could also allow $P$ to be a more general function for which $\Omega$ is pseudoconvex, that is take $P=P(z_1,\Re(z_2))$. In such a situation the volume of CC balls with such a choice of $P$ would \textit{a priori} depend on the $\Re(z_2)$-direction. Since the methods of this paper heavily exploited the $\Re(z_2)$-translation invariance of $\Omega$, it is unclear if these methods can be easily extended to handle the more general situation.
	
\bibliographystyle{amsalpha}

\end{document}